\numberwithin{equation}{section}
\newcommand{\norm}[1]{\| #1 \|}
\newcommand{\abs}[1]{| #1 |}
\newcommand{\Bigabs}[1]{\Bigl| #1 \Bigr|}
\newcommand{\jap}[1]{\langle #1 \rangle}
\def\a{\alpha}
\def\b{\beta}
\def\d{\delta}
\def\e{\varepsilon}
\def\f{\varphi}
\def\k{\kappa}
\def\m{\mu}
\def\t{\tau}
\def\x{\xi}
\def\y{\eta}
\def\th{\theta}
\renewcommand{\O}{\Omega}
\newcommand{\OP}{\mathrm{Op}}
\def\re{\mathbb{R}}
\def\na{\mathbb{N}}
\def\pa{\partial}
\newtheorem{thm}{Theorem}[section]
\newtheorem{lem}[thm]{Lemma}
\newtheorem{prop}[thm]{Proposition}
\newtheorem{cor}[thm]{Corollary}
\theoremstyle{definition}
\newtheorem{defn}{Definition}[section]
\newtheorem{ass}{Assumption}
\theoremstyle{remark}
\newtheorem*{thm*}{Theorem}
\newtheorem*{preack}{Acknowledgment}
\newenvironment{ack}{\begin{preack}\normalfont}{\end{preack}}
\theoremstyle{remark}
\newtheorem*{rem*}{Remark}
	\title{Semiclassical Defect Measures and the Observability Estimate for Schr\"odinger Operators with Homogeneous Potentials of Order Zero}
	\author{
		Keita M{\sc ikami}%
		\footnote{Graduate School of Mathematical Sciences, University of Tokyo, 3-8-1 Komaba, Meguro Tokyo, 153-8914 Japan. E-mail: {\tt kmikami@ms.u-tokyo.ac.jp}.  }
		}
\begin{document}

		\maketitle
		\begin{abstract}
			We study the asymptotic behavior as $\abs{x}\to\infty$ of Schr\"odinger operators with homogeneous potentials. For this purpose, we use methods from semiclassical analysis and investigate semiclassical defect measures. We prove their localization in the direction we apply to obtain a necessary observability condition.
		\end{abstract}

		%MSC(2010): 58J50, 35P25, 81U05

		%Keywords:  scattering theory on a manifold, spectral properties, asymptotic completeness, homogeneous potentials of order zero.

		\section{Introduction}
		%\section{Setting and Main Results}

		Let $P=-\bigtriangleup + V$ be a Schr\"odinger operator on $\re^n$.
		We make the following assumption on the potential $V$.
		\begin{ass}
			(1) $V$ is a real valued smooth function.\\
			(2) We can decompose $V$ as $V=V_{\infty}+V_s$; here $V_{\infty}$ is real-valued, smooth, and homogeneous of order zero, i.e., $V_{\infty}(x)=V_{\infty}(\frac{x}{\abs{x}})$ for $\abs{x}\geq1$, and $V_s(x)=o(x^{-1})$ as $\abs{x}\to\infty$.
			%(2) $E \in \s_{ess}(P)$.
		\end{ass}

		If $V_{\infty}$ is homogeneous of order zero, one can regard $V_{\infty}$ as a function on $S^{n-1}$. We use the same symbol $V_{\infty}$ for the original potential and restriction of this function to $S^{n-1}$.

		Let $j \in C^{\infty}(\re:[0,1])$ be such that $j(r)=0$ if $r \leq \frac12$ and $j(r)=1$ if $1 \leq r$. For $a \in C^{\infty}_0(\re \times T^*S^{n-1})$, one can regard $j(r)a(\rho,\th,\frac{\y}{r})$ as an element of $C^{\infty}(T^*\re^n)$, here we use polar coordinate $(r,\th)$ of $\re^n\setminus\{0\}$ and its dual $(\rho,\y)$.
		The function $\tilde a(x,\x)=j(r)a(\rho,\th,\frac{\y}{r})$ on $T^*\re^n$ is in the symbol class $S$. Then Weyl quantization $\tilde a^{\mathrm{w}}(hx,D_x)$ of the symbol $\tilde a$ becomes a well-defined bounded linear operator on $L^2(\re^n)$. We write  $\tilde a^{\mathrm{w}}(hx,D_X)=\OP_{j}(a)$ (See Section 2.1 for the detail).

		\begin{thm}
		 (Existence of semiclassical defect measure for $\mathrm{Op}_j$)\\
		 Let $u_h\in L^2(\re^n)$ be a bounded sequence in $h$. There exists a sequence of positive numbers $h_m$  and a positive finite Radon measure $\m_{j}$ on $\re \times T^*S^{n-1}$ such that $h_m \to 0$ as $m \to \infty$ and
		 \begin{align*}
		 \jap{u_{h_m},\OP_{j}(a)u_{h_m}}_{L^2(\re^n)} \to \int_{\re \times T^*S^{n-1}} a \mathrm{d}\m_{j} \text{ as } m \to \infty,
		 \end{align*}
		 for all $a \in C^{\infty}_0(\re \times T^*S^{n-1})$.
	 \end{thm}

	 Wigner first introduced the notion of the semiclassical measure in \cite{wig}. The study of the partial differential equation using defect measure appeared in \cite{pl}, and Patrick-G\`erard refined it in \cite{pg}. You can find several proofs of the existence of semiclassical measures in \cite{cdv, ger, lp, zel}.
	Readers can find a good survey of this subject in \cite{am}.

	Usually, we define a semiclassical defect measure as a measure on a cotangent bundle. Roughly speaking, this standard semiclassical defect measure treats a point in the cotangent bundle whose orbits of the Hamiltonian flow generated by $p$ are trapped. One can prove that if the Hamiltonian flow generated by $p$ is non-trapping, $\m$ is identically zero. Under some assumptions, Schr\"odinger operators with homogeneous potentials are non-trapping(See Section 2 of \cite{h} for the detail). Thus we cannot apply the usual semiclassical analysis.

	Our idea is to consider a point in the cotangent bundle whose orbits of the Hamiltonian flow generated by $p$ scatters. We realize this idea by taking the position to infinity instead of taking the energy to infinity.
	One can find a non-semiclassical quantization similar to our new quantization in \cite{chs}.

	\begin{ass}
		Let $u_h\in\mathcal{D}(P)$ be such that
		\begin{eqnarray}
		\left\{
		\begin{array}{l}
		(P-E)u_h=R_h\\
		\norm{u_h}_{L^2(\re^n)}=1,
		\end{array}
		\right.
		\end{eqnarray}
		\begin{enumerate}[(\text{B}-1)]
			\item There exist $c_{\ell}:(0,1)\to(0,\infty)$, $\ell=1,2$ such that $c_2$ is monotone increasing, $0\leq c_1 \leq c_2$ and $c_2\to0$ as $h\to0$.
			\item Let $\tilde j_{h}(x) = j(c_2(h)\abs{x}) - j(c_1(h)\abs{x})$. $u_h=\tilde j_h u_h+R_h^{'}$ with $\norm{R_h^{'}}_{L^2(\re^n)}=o(1)$ as $h\to0$.
			\item There exists $h_0>0$ and $\d\in (1,2)$ such that $c_2(h)^{\d} < c_1(h)$ for each $h\in(0,h_0)$.
		\end{enumerate}
	\end{ass}
	If $c_2$ satisfies Assumption (B-1), then $c_2$ is injective. Since $c_2(h)\to0$ as $h\to0$ under Assumption (B-1), $c_2^{-1}:(0,\e_0)\to(0,1)$ exists for sufficiently small $\e_0>0$. Let $\tilde h=c_2(h)$.
	One can easily see that $h\to0$ as $\tilde h \to0$. Let $v_{\tilde h}=u_{c_2^{-1}(\tilde h)}$

		\begin{thm}
			Suppose Assumption (A) and (B) are satisfied and $\m_{j}$ be a defect measure of $v_{\tilde h}$. If $R_h =o(c_1(h))$ as $h\to0$, following statements hold:\\
				(1) $E\in \mathrm{Cr}(V)$,\\
				(2)  $\mathrm{supp} (\m_{j}) \subset \{(0,\th,0)\in\re\times T^*S^{n-1} \mid \th \in \mathrm{Cr}(V)\cap V^{-1}(E)\}$.
		\end{thm}

		Following corollary is a direct consequence of Theorem 1.2
		\begin{cor}
			Suppose Assumption (A) and (B) are satisfied with $c_1(h)=C_1h$ and $c_2(h)=C_2h$ for some $C_1,C_2>0$. If $R_h =o(h)$ as $h\to0$, following statements hold:\\
			(1) $E\in \mathrm{Cr}(V)$,\\
			(2) $\mathrm{supp} (\m_{j}) \subset \{(0,\th,0)\in\re\times T^*S^{n-1} \mid \th \in \mathrm{Cr}(V)\cap V^{-1}(E)\}$.
		\end{cor}

		The statement of (1) in Theorem 1.2 implies that there are not so many $o(c_1(h))$-quasimodes whose support escapes from the origin with $c_2(h)^{-1}$ order.

		When $c_1(h)=C_1h$ and $c_2(h)=C_2h$ with $C_1<C_2$, we can construct $o(h)$-quasimodes whose support escapes from the origin with $h^{-2}$ order (See Section 4).

		 The claim of (2) in Theorem 1.2 is a semiclassical analog of the localization of the solution in direction proved in \cite{hmv,hmv2,h,hs2}.
		 Let $\th\in S^{n-1}$. We define $\mathcal{H}_{\th} \subset L^2(\re^n)$ by $\{\f\in L^2(\re^n) \mid (\frac{x}{\abs{x}}-\th)e^{-itP}\f\rightarrow 0 \text{ as t} \rightarrow \infty\}$.
		 Then it is shown that there exists $\{\th_m\}^{M}_{m=1}$ such that $L^2(\re^n) = \oplus^M_{m=1} \mathcal{H}_{\th_m}$. When the potential is Morse function on $S^{n-1}$, $\{\th_m\}^{M}_{m=1}$ are local minimum of $V_{\infty}$.
		 This localization of the solution in direction extends to the case of scattering manifold by Hassell, Melrose, and Vasy in \cite{hmv} and \cite{hmv2} under the setting of \cite{mel}.

			One difference of the localization of the solution in the direction in \cite{hmv,hmv2,h,hs2} from our version is the appearance of $L^2$ states, which localize to saddle point and the local minimum points.
			In \cite{hmv}, Hassell-Melrose-Vasy showed that there is a distribution that localizes in saddle points or local minimum points.
			The appearance of $L^2$ states that localizes to saddle point and the local minimum points is essential. This appearance is because one can take $u_h$ such that $\m_j\neq 0$ and the support of $\m_j$ is in the direction of local maxima or saddle point (See Section 4).

			One can also show a result on the relationship between quasimodes and their support.
			\begin{ass}
				Let $u_h\in\mathcal{D}(P)$ satisfy (1.1).
				\begin{enumerate}[(\text{C}-1)]
					\item There exist $c_{\ell}:(0,1)\to(0,\infty)$, $\ell=1,2$ such that $c_2$ is monotone increasing, $0\leq c_1 \leq c_2$ and $c_2\to0$ as $h\to0$.
					\item Let $j_{\e}(x)=j(\e\abs{x})$ for $\e>0$. $u_{h}= j_{c_2(h)} u_{h}+R_{h}^{'}$ with $\norm{R_{ h}^{'}}_{L^2(\re^n)}=o(1)$ as $h\to0$.
					\item There exists $h_0>0$ and $\d\in (1,2)$ such that $c_2(h)^{\d} < c_1(h)$ for each $h\in(0,h_0)$.
				\end{enumerate}
			\end{ass}

		\begin{thm}
			Suppose Assumption (A) and (C) are satisfied. Further, we assume  $\norm{R_h}_{L^2(\re^n)}=o(c_1(h))$ as $h\to0$. Then either\\
			(1) $E\in \mathrm{Cr}(V)$\\
			(2) $(1- j_{c_2(h)})u_{h}\to 0$ as $h \to 0$
			implies $(1- j_{c_1(h)})u_{h}\to 0$
			 %on $\{x\in\re^n\mid \abs{x}<c_1(h)^{-1}\}$
			 as $h \to 0$.\\
			holds.
		\end{thm}

		\begin{rem*}
			Theorem 1.4 states that if $E\notin \mathrm{Cr}(V)$, quasimodes supported in $\{\abs{x}>c_2(h)^{-1}\}$ must supported in $\{x\in\re^n\mid \abs{x}>\frac12c(h)^{-1}\}$.
			%If $\norm{R_h}_{L^2(\re^n)}=o(h)$, $c_1(h)=(\mathrm{log}(h^{-1}))^{-1}h^{-1}\norm{R_h}_{L^2(\re^n)}$ is an admissible order function.
			This implies that any $o(h)$-quasimodes that escapes from the origin faster than $h^{-1}$ order has a certain minimum escaping order faster than $h^{-1}$ if $E\notin \mathrm{Cr}(V)$.
		\end{rem*}

		We turn to the application of our semiclassical measure. We can prove an observability result. Let $\O\subset\re^n$ be a measurable set. We say observability holds on $\O$ if for some $T>0$ there exists $C_{\O,T}>0$ such that
		\begin{align*}
		& \norm{u}_{L^2(\re^n)}\leq C_{\O,T}\int^T_0\int_{\O}\abs{e^{-itP}u(x)}^2\mathrm{d}x\mathrm{d}t
		\end{align*}
		for any $u\in L^2(\re^n)$.

		\begin{thm}
			Let $\O\subset\re^n$ be a domain such that.

			$\O\cap\{x\in\re^n\mid\abs{x}>R\}\subset \re^n\setminus \{(r,\th)\in\re^n\mid r>R,\mathrm{dist}(\th,\th_0)<Cr^{-\ell}\}$ for some $R,C>0$, $0<\ell<1$ and $\th_0\in S^{n-1}$.

			Then the observability estimate on $\O$ fails for any $T>0$, i.e., there exists $u_m\in L^2(\re^n)$ such that $\norm{u_m}_{L^2(\re^n)}=1$ and $\int^T_0\int_{\O}\abs{e^{-itP}u_m(x)}^2\mathrm{d}x\mathrm{d}t\to0$ as $m\to\infty$.
		\end{thm}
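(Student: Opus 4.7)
The plan is to construct, for each $T>0$, a sequence of $L^2$-normalized states $u_m\in L^2(\re^n)$ whose Schr\"odinger evolution $e^{-itP}u_m$ remains essentially concentrated, for all $t\in[0,T]$, inside the angular tube
\[
\mathcal{T} := \bra{(r,\th)\in\re^n\mid r>R,\ \dist(\th,\th_0)<Cr^{-\ell(k)}}
\]
that by hypothesis is disjoint from $\O$ beyond radius $R$. Once this localization is secured, $\int_\O\abs{e^{-itP}u_m(x)}^2\mathrm{d}x = o(1)$ uniformly in $t\in[0,T]$, so $\int_0^T\!\int_\O\abs{e^{-itP}u_m}^2\mathrm{d}x\,\mathrm{d}t\to 0$ while $\norm{u_m}_{L^2}=1$, contradicting any observability constant $C_{\O,T}$.

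The first step is to choose the wave packets. Pick $h_m=r_m^{-1}\to 0$ and form a Gaussian-type state centered at $x_m=r_m\th_0$ with outward radial momentum $\rho>0$: in polar coordinates,
\[
u_m(r,\th) = C_m\, \phi\bigpare{(r-r_m)/A}\,\chi\bigpare{(\th-\th_0)/\sigma_m}\, e^{i\rho r},
\]
with $\phi,\chi\in C_0^\infty$, $A$ chosen to control radial dispersion on $[0,T]$, and angular width $\sigma_m$ chosen slightly below $r_m^{-\ell(k)}$. The constant $C_m$ normalizes $u_m$ in $L^2$.

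The second and main step is a propagation estimate: $e^{-itP}u_m$ remains, up to $L^2$-error $o(1)$, supported in $\mathcal{T}$ for all $t\in[0,T]$. Classically, the Hamiltonian $\rho^2+\abs{\eta}^2/r^2+V_{\infty}(\th)$ preserves the ray $\re_{>0}\th_0$, since $\pa_\th V_\infty(\th_0)=0$ forces $\eta\equiv 0$ along the radial orbit. A perturbed trajectory with angular offset $\delta$ and zero angular momentum experiences an angular force $\sim\delta^{k}$ (from the vanishing of $\pa_\th^{\tilde k}V$ for $\tilde k\leq k$) dressed by the $r^{-2}$ geometric factor, so after time $T$ with $r\sim r_m$ its angular deviation grows by at most $CT^2\delta^{k}r_m^{-2}$. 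For $k\geq 1$ this is much smaller than $r_m^{-(k+1)}$, keeping the orbit inside $\mathcal{T}$; an Egorov/WKB argument built on the semiclassical calculus of $\OP_{f_h}$ from Section 2 transfers this classical localization to $e^{-itP}u_m$. The borderline case $k=0$, with exponent $\ell(0)=\tfrac23$, corresponds to an Airy-type caustic at a nondegenerate critical point and requires a finer normal-form reduction at $\th_0$; this is the principal technical obstacle of the proof.

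The remaining step is bookkeeping: combine the propagation estimate with the disjointness $\O\cap\mathcal{T}=\emptyset$ for $\abs{x}>R$ (using a fixed cutoff near the origin whose contribution is controlled by standard local smoothing for $e^{-itP}$) to conclude $\int_\O\abs{e^{-itP}u_m}^2\mathrm{d}x=o(1)$ uniformly in $t\in[0,T]$, and integrate. The radial dispersion estimate and the Egorov step are essentially standard modulo the scaled quantization of Section 2; the decisive difficulty is matching the sharp angular-spread rate $r^{-\ell(k)}$, in particular the exponent $\tfrac23$ rather than the naive $1$ in the nondegenerate case.
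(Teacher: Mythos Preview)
Your approach differs substantially from the paper's, and you are making the problem considerably harder than necessary. The paper runs no propagation or Egorov argument whatsoever. Instead it feeds in the quasimodes $u_h$ already built in Theorem~4.1: these are $L^2$-normalized, supported in the tube $\mathcal{T}$ (so $\norm{u_h}_{L^2(\O)}=0$ for small $h$), and, crucially, satisfy $\norm{(P-E)u_h}_{L^2}=o(1)$. The time evolution is then handled by the elementary identity
\[
\frac{\mathrm{d}}{\mathrm{d}t}\jap{e^{-itP}u_h,\chi_{\O} e^{-itP}u_h}
= -i\jap{e^{-itP}(P-E)u_h,\chi_{\O} e^{-itP}u_h} + i\jap{e^{-itP}u_h,\chi_{\O} e^{-itP}(P-E)u_h},
\]
the $E$-terms cancelling because $\chi_{\O}$ is a multiplication operator. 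The modulus of the right side is at most $2\norm{(P-E)u_h}$, so integrating from $0$ to $t\leq T$ gives $\norm{e^{-itP}u_h}_{L^2(\O)}^2\leq 2T\norm{(P-E)u_h}\to 0$ uniformly in $t\in[0,T]$, and one more integration in $t$ finishes. No semiclassical propagation, no Egorov, no Airy normal form.

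Your wave packets, with fixed radial width $A$ and a freely chosen momentum $\rho>0$, are not quasimodes: the term $(\rho^2+V(\th_0)-E)u_m$ is $O(1)$ unless $\rho$ is tuned to $\rho^2=E-V_\infty(\th_0)$, and the radial-kinetic error is $O(A^{-2})$ unless $A\to\infty$ with $m$. Consequently you cannot use the one-line ODE trick above and are forced into a genuine propagation estimate. That route might be completable, but the Egorov step in the scaled calculus of Section~2 is not off-the-shelf, and the $k=0$ case you flag as the ``principal technical obstacle'' simply does not arise in the paper's argument: the exponent $\ell(0)=\tfrac23$ comes, in Theorem~4.1, from balancing the radial-kinetic error against the potential error in the quasimode construction (radial scale $h^{-3/2}$ rather than $h^{-1}$), not from any caustic analysis. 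The moral is to build approximate eigenfunctions rather than moving wave packets, so that the dynamics becomes trivial.
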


		It is known that observability is equivalent to the controllability in \cite{li}. The controllability means the condition that for any $u_0\in L^2(\re^n)$ there exists $f\in L^2((0,T)\times\O)$ such that the solution to the equation
		\begin{eqnarray*}
			\left\{
			\begin{array}{l}
				(i\pa_t+P)u(t,x)=f\chi_{(0,T)\times\O}(t,x)\\
				u(0,x)=u_0(x).
			\end{array}
			\right.
		\end{eqnarray*}
		satisfies $u(t,x)\equiv0$.

		In \cite{lb}, Lebeau showed the observability of if the manifold is compact and the geodesic satisfies geometric control condition. Lebeau uses a semiclassical defect measure in that proof.

		The plot of this paper is as follows.
		We first introduce a new semiclassical quantization and give some basic properties to prove Theorem 1.1 in section 2. We also prove some results in classical mechanics in Section 2.
		In section 3, we prove Theorem 1.2 and 1.4. The proof is similar to that of the Hamiltonian flow invariance of the usual semiclassical defect measures.
		We construct an example of $u_h$ whose semiclassical defect measure $\m$ is not identically zero in Section 4. Finally, we give proof of Theorem 1.5 in Section 5.

			\begin{ack}
				The author is grateful to Professor Fabricio Maci\`a and Professor Shu Nakamura for suggesting the idea of the new semiclassical quantization in personal communication.
				The author is also grateful to Professor Erik Skibsted for introducing the author to an unpublished preprint and having many discussions. The author is also thankful to Professor Kenichi Ito and Kouichi Taira for helpful discussion. The author is also grateful to Genki Sato for correcting many grammatical mistakes.
				The author is under the support of the FMSP(Frontiers of Mathematics Science and Physics) program at the Graduate School of Mathematical Sciences, University of Tokyo. Also, he is supported by JSPS KAKENHI Grant Number 18J12370.
			\end{ack}

		\section{Preliminaries}
		\subsection{Admissible cut-off function}

		This subsection introduces the notion of the admissible cut-off function and proves the generalized version of Theorem 1.1 (see Theorem 2.1 for the detail).

		\begin{defn}(admissible cutoff function)\\
			Let $\{f_h\}_{h\in(0,1)} \subset C^{\infty}(\re)$ be a family of functions. We say $f_h$ is an admissible cutoff function if $f_h$ satisfies the following conditions:\\
			(1) $f_h(r)=0$ if $r \leq \e$ for some $\e>0$ independent in $h$.\\
			(2) There exists $\d\in(0,\frac12)$ such that, for any $m\in\na$, there exists $C_m>0$ such that $\sup_{r\in\re}{\abs{\pa_r^mf_h(r)}} <C_mh^{-\d m}$ uniformly in $h$.
		\end{defn}

		Let $a \in C^{\infty}(\re \times T^*S^{n-1})$ and admissible cutoff function $f_h$. One can regard $f_h(r)a(\rho,\th,\frac{\y}{r})$ as an element of $C^{\infty}(T^*\re^n)$ for small $h$ with the natural diffeomorphism $T^*\re_{> 0 (r,\rho)} \times T^*S^{n-1}_{(\th,\y)} \simeq T^*\re^n\setminus\{0\}_{(x,\x)}$ induced by the polar coordinate.
		Then the function $\tilde a_h(x,\x)=f_h(r)a(\rho,\th,\frac{\y}{r})$ on $T^*\re^n$ is well-defined. If the all derivatives of the $a$ is uniformly bounded on $\re\times T^{*}S^{n-1}$, $\tilde a$ is in the symbol class $S$, where
		\begin{align*}
			S=\{a\in C^{\infty}(T^{*}\re^n) \mid \forall \a,\b\in \na^n, \sup_{(x,\x)\in T^{*}\re^n}\abs{\pa_x^{\a}\pa_{\x}^{\b} a(x,\x)}< \infty\}.
		\end{align*}
		For the readers' convince, we will prove $\tilde a(x,\x) \in S$.

		Let $v\in T_{x}\left(\re^n\setminus\{0\}\right)$. From the cartesian coordinates, we can write $v=\sum_{m=1}^nv_i\pa_{x_i}$. Also, if we fix local coordinate $(U,\psi)$ of $S^{n-1}$ with $\frac{x}{\abs{x}}\in U$, we can write $v=v_r\pa_r+\sum_{m=1}^{n-1}v_{\th_i}\pa_{\th_i}$ using polar coordinate.

		Let $\tilde \psi$ be a map $(0,\infty)\times U \subset \re^n \to \psi(U)$ which takes $y$ to $\psi(\frac{y}{\abs{y}})$. Then we can write $v_r=\frac{x}{\abs{x}}\cdot \overrightarrow{v}$ and $v_{\th}=J(\tilde\psi) \frac1{\abs{x}}\{I_n-(\frac{x_ix_j}{\abs{x}^2})_{i,j}\}\overrightarrow{v}$ where $\overrightarrow{v}=(v_1,v_2,\cdots,v_n)$ and $J(\tilde\psi)$ denotes the Jacobi matrix of $\tilde\psi$ at $x$.

		Let $\x\in T^{*}\re^n$. Then using dual coordinate of cartesian coordinates and polar coordinate, $\x$ can be written as $\sum_{m=1}^n \x_i\mathrm{d}x_i$ and $\rho\mathrm{d}r+\sum_{m=1}^{n-1}\y_i\mathrm{d}\th_i$.

		Using cartesian coordinate, we see $\x(v)=\sum_{m=1}^n \x_iv_i$. Using polar coordinate, we see $\x(v)=\rho\frac{x}{\abs{x}}\cdot \overrightarrow{v}+\y\cdot J(\tilde\psi) \frac1{\abs{x}}\{I_n-(\frac{x_ix_j}{\abs{x}^2})_{i,j}\}\overrightarrow{v}$.

		Substituting $\overrightarrow{v}=\frac{x}{\abs{x}}$, we see $\rho=\frac{x}{\abs{x}}\cdot\x$. If $\{I_n-(\frac{x_ix_j}{\abs{x}^2})_{i,j}\}\overrightarrow{v}=\overrightarrow{v}$ i.e. $\frac{x}{\abs{x}}\cdot\overrightarrow{v}=0$, we see $\abs{x}\x\cdot\overrightarrow{v}=\y\cdot J(\tilde\psi)\overrightarrow{v}$, which means $\abs{x}\x={}^tJ(\tilde\psi)\y$.

		Thus we obtain
		\begin{align*}
		& \forall \a,\b\in \na^n, \sup_{(x,\x)\in T^{*}\re^n}\abs{\pa_x^{\a}\pa_{\x}^{\b}\tilde a(x,\x)}<\infty \\
		& \Leftrightarrow \forall m,\ell\in\na, \tilde\a,\tilde\b\in \na^{n-1}, \sup_{(x,\x)\in T^{*}\re^n}\abs{\pa_r^{m}\pa_{\rho}^{\ell}\pa_{\th}^{\tilde\a}(r\pa_{\y})^{\tilde\b}\tilde a(x,\x)}<\infty.
		\end{align*}
		Thus $\tilde a(x,\x) \in S$.

		 Now Weyl quantization $\tilde a^{\mathrm{w}}_h(hx,D_x)$ of the symbol $\tilde a$ becomes a well-defined bounded linear operator on $L^2(\re^n)$, given as the extension of
 		 \begin{align*}
 		 & \tilde a^w_h(hx,D_x)u(x)=\frac{1}{(2\pi)^n}\int_{\re^{2n}}e^{i(x-y)\cdot\x}\tilde a_h\left(\frac{h(x+y)}{2},\x\right)u(y)\mathrm{d}y\mathrm{d}\x,
 		 \end{align*}
 		 for $u\in \mathscr{S} (\re^n)$. We write  $\tilde a_{h}^{\mathrm{w}}(hx,D_X)=\OP_{f_h}(a)$.

		 \begin{thm}
			(Existence of semiclassical defect measure for the admissible cut-off function)\\
			Let $u_h\in L^2(\re^n)$ be a bounded sequence in $h$ and $f_h$ be a admissible cut-off function. There exists a sequence of positive numbers $h_m$ such that $h_m \to 0$ as $m \to \infty$ and a finite Radon measure $\m_{f}$ on $\re \times T^*S^{n-1}$ and
			\begin{align*}
			\jap{u_{h_m},\OP_{f_{h_m}}(a)u_{h_m}}_{L^2(\re^n)} \to \int_{\re \times T^*S^{n-1}} a \mathrm{d}\m_{f} \text{ as } m \to \infty,
			\end{align*}
			for all $a \in C^{\infty}_0(\re \times T^*S^{n-1})$. Furthermore, if $f_h$ is non negative, $\m_{f}$ is also non negative.
		\end{thm}

		\begin{rem*}
			$j$ in section 1 is appearently an admissible cut-off function. Thus Theorem 1.1 follows from Theorem 2.1.
		\end{rem*}

		 We define dilation operator $U_h$ by $U_hu(x)=h^{\frac{n}2}u(\frac{x}{h})$ for $u\in L^2(\re^n)$. Then $U_h$ is unitary and one can calculate
 		\begin{align}
 		& \OP_{f_h}(a)= U_h^{-1}\tilde a^{\mathrm{w}}(X,hD_X) U_h.
 		\end{align}
 		 Thus we can apply a usual semiclassical analysis for $S$.
 		 Then one can use results in usual semiclassical analysis in \cite{zws} to obtain the following theorems.

		 \begin{thm}(Calderon-Vaillancourt Theorem)\\
		 	For $a \in S$, there exists $C>0$ such that\\
		 	$\norm{a^{\mathrm{w}}(hx,D_X)}_{\mathcal{L}(L^2(\re^n))} \leq C\sup_{(x,\x)\in\re^{2n}}\abs{a(x,\x)} + \mathcal{O}(h^{\frac12})$ as $ h \to 0$.
		 \end{thm}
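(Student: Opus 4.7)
The plan is to reduce the statement to the standard semiclassical Calderón--Vaillancourt theorem via the dilation identity already recorded as (2.1). That identity is stated for symbols of the special form $\tilde a_h(x,\xi)=f_h(r)a(\rho,\theta,\eta/r)$, but its proof is simply a change of variables $y\mapsto hy$, $\xi\mapsto\xi/h$ in the oscillatory integral defining the Weyl quantization and makes no use of the particular structure of $\tilde a_h$. Consequently, for every symbol $a\in S$ one has the same conjugation
\begin{equation*}
a^w(hX,D_X) \;=\; U_h^{-1}\, a^w(X,hD_X)\, U_h.
\end{equation*}

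Since $U_h$ is unitary on $L^2(\re^n)$, operator norms are preserved, so
\begin{equation*}
\bignorm{a^w(hX,D_X)}_{\mathcal L(L^2(\re^n))} \;=\; \bignorm{a^w(X,hD_X)}_{\mathcal L(L^2(\re^n))}.
\end{equation*}
The symbol $a$ lies in the uniform class $S(1)$, i.e.\ $\sup_{(x,\xi)}|\pa_x^\alpha\pa_\xi^\beta a(x,\xi)|<\infty$ for every multi-index, so the standard semiclassical Calderón--Vaillancourt theorem in \cite{zws} applies to $a^w(X,hD_X)$ and gives a bound of the form $C\sup|a|+\mathcal O(h^{1/2})$, with the $\mathcal O(h^{1/2})$ term controlled by finitely many derivatives of $a$. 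Combining the two displays yields the stated inequality.

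The main (and essentially only) thing to verify is the bookkeeping in the conjugation identity above: one must carry out the substitutions $y=hz$ and $\xi=h\eta$ in the integral representing $a^w(X,hD_X)U_h u$ and check that the Jacobian factors cancel against the $h^{-n/2}$ appearing in $U_h$ and $U_h^{-1}$, so that the net effect is precisely to replace $X,hD_X$ by $hX,D_X$ inside the Weyl quantization. This is purely mechanical; the analytic content is entirely packaged in the Calderón--Vaillancourt theorem invoked from \cite{zws}, which we use as a black box.
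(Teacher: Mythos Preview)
Your proposal is correct and follows exactly the route the paper intends: the paper does not give a separate proof of Theorem~2.1 but simply remarks that, having established the conjugation identity~(2.1), ``one can use results in usual semiclassical analysis in \cite{zws} to obtain the following theorems.'' Your write-up makes this reduction explicit---conjugating by the unitary dilation to pass from $a^w(hX,D_X)$ to $a^w(X,hD_X)$ and then invoking the standard Calder\'on--Vaillancourt estimate from \cite{zws}---which is precisely the paper's argument spelled out.
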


		\begin{thm}(Sharp G$\mathring{a}$rding inequality)\\
			Suppose $a \in C^{\infty}_0(\re \times T^*S^{n-1}:[0,\infty))$. Then there exist $C>0$ and $h_0 >0$ such that
			\begin{align*}
			& \jap{u,\OP_{f_h}(a)u}_{L^2(\re^n)} \geq -Ch\norm{u}^2_{L^2(\re^n)}
			\end{align*}
			for $u \in L^2(\re^n)$ and $0<h<h_0$.
		\end{thm}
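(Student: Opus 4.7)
The plan is to reduce this directly to the standard semiclassical Sharp Gårding inequality via the unitary conjugation identity (2.1), namely $\OP_{f_h}(a) = U_h^{-1}\tilde a^w(X,hD_X)U_h$, which has already been established in this subsection. The work is essentially done by this identity and the preceding verification that $\tilde a \in S$.

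First I would recall from the discussion preceding Theorem 2.1 that the symbol $\tilde a_h(x,\xi) = f_h(r)\,a(\rho,\theta,\eta/r)$ lies in the class $S$, with seminorms bounded uniformly in $h$ because $a\in C^\infty_0(\re\times T^*S^{n-1})$ and $f_h$ is admissible. Moreover, under the standing convention that $f_h\geq 0$ (which must be understood here, as otherwise the positivity hypothesis on $a$ alone does not produce a non-negative symbol), the positivity of $a$ gives $\tilde a_h\geq 0$ on $T^*\re^n$.

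Next I would apply the classical semiclassical Sharp Gårding inequality for symbols in $S$, as stated for instance in \cite{zws}, to the non-negative symbol $\tilde a_h$ with semiclassical parameter $h$: there exist $C>0$ and $h_0>0$ such that
\begin{align*}
    \jap{v,\tilde a_h^w(X,hD_X)v}_{L^2(\re^n)}\geq -Ch\norm{v}^2_{L^2(\re^n)}
\end{align*}
for every $v\in L^2(\re^n)$ and $0<h<h_0$. The uniformity in $h$ of the $S$-seminorms of $\tilde a_h$ is what guarantees that $C$ can be taken independent of $h$.

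Finally I would specialize $v=U_hu$. Since $U_h$ is unitary, $\norm{U_hu}_{L^2}=\norm{u}_{L^2}$, and by (2.1),
\begin{align*}
    \jap{U_hu,\tilde a_h^w(X,hD_X)U_hu}_{L^2(\re^n)}
    =\jap{u,U_h^{-1}\tilde a_h^w(X,hD_X)U_hu}_{L^2(\re^n)}
    =\jap{u,\OP_{f_h}(a)u}_{L^2(\re^n)},
\end{align*}
so the claimed inequality follows. There is no real obstacle; the only point requiring care is the uniformity in $h$ of the $S$-seminorms of $\tilde a_h$, which is immediate from the admissibility condition (2) on $f_h$ and the compact support of $a$, together with the equivalence of seminorm families in polar and Cartesian coordinates established earlier in this subsection.
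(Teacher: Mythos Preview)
Your proposal is correct and matches the paper's approach exactly: the paper does not give a separate proof of Theorem~2.2 but simply notes that, via the conjugation identity~(2.1) and the fact that $\tilde a_h\in S$ uniformly in $h$, one can invoke the standard semiclassical Sharp G\r{a}rding inequality from~\cite{zws}. Your observation that one must implicitly assume $f_h\geq 0$ for the symbol $\tilde a_h$ to be non-negative is a valid point of care that the paper leaves tacit.
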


		\begin{proof}[Proof of Theorem 2.1]
			The proof is essentially the same as that of Theorem 5.2 in \cite{zws}. However, we give the detail for completeness.

			Since $C_0(\re \times T^*S^{n-1})$ is separable with the topology defined by sup-norm and $C^{\infty}_0(\re \times T^*S^{n-1})$ is dense subspace, thus one can find $\{a_{\ell}\}\in C^{\infty}_0(\re \times T^*S^{n-1})$ which is dense in $C_0(\re \times T^*S^{n-1})$.

			From Theorem 2.2, $\jap{u_h,\OP_{f_h}(a_1)u_h}$ is bounded in $h$. Thus one can find sequence $h^{(1)}_m$ such that $h^{(1)}_m \to 0$ and $\jap{u_{h^{(1)}_m},\OP_{f_{h^{(1)}_m}}(a_1)u_{h^{(1)}_m}} \to F(a_1)$ as $m \to \infty$ for some $F(a_1)$.

			 Similarly, for $\ell=2,3,4,\cdots$ one can find sequence $h^{(\ell)}_m$ which is subsequence of $h^{(\ell-1)}_m$ and $\jap{u_{h^{(\ell)}_m},\OP_{f_{h^{(\ell)}_m}}(a_{\ell})u_{h^{(\ell)}_m}} \to F(a_{\ell})$ as $m \to \infty$ for some $F(a_{\ell})$. Then by diagonal argument one can find sequence $h_m$ such that $\jap{u_{h_m},\OP_{f_{h_m}}(a_l)u_{h_m}} \to F(a_{\ell})$ as $m \to \infty$ for each $\ell$.
			 %By the density argument, the linear functional $a_l \mapsto F(a_l)$ can be extended to the linear functional on $C_0(\re \times T^*S^{n-1})$. Again

			 From Theorem 2.2, one can calculate as follows:
			 \begin{align*}
				& \jap{u_{h_m},\OP_{f_{h_m}}(a_{\ell})u_{h_m}}\\
				& \leq \norm{\OP_{f_{h_m}}(a_{\ell})}_{\mathcal{L}(L^2(\re^n))} \norm{u_{h_m}}_{L^2(\re^n)}\\
				& \leq C\sup_{(r,\rho,\th,\y)\in\re^{2n}}\abs{f_{h_m}(r)a_{\ell}(\rho,\th,\y)} + \mathcal{O}(h^{\frac12})\\
				& \leq C\sup_{(\rho,\th,\y)\in\re \times T^*S^{n-1}}\abs{a_{\ell}(\rho,\th,\y)} + \mathcal{O}(h^{\frac12}),
			\end{align*}
			 where we have used the fact $f_h$ is uniformly bounded in the last line.
			 Thus a functional $a_{\ell} \mapsto F(a_{\ell})$ defines a bounded and linear functional $F$ on $C_0(\re \times T^*S^{n-1})$.
			  Then Riesz-Markov-Kakutani theorem implies there exists a Radon measure $\m_{f}$ such that $\jap{u_{h_m},\OP_{f_{h_m}}(a)u_{h_m}} \to \int_{\re \times T^*S^{n-1}}a \mathrm{d}\m_{f}$ as $m \to \infty$ for any $a\in C^{\infty}_0(\re \times T^*S^{n-1})$.

			 Taking $\chi_n \in C_0(\re \times T^*S^{n-1})$ such that $0\leq\chi_n \nearrow 1$ pointwise as $n \to \infty$. Then one obtains $\lim_{n\to \infty} \int_{\re \times T^*S^{n-1}}\chi_n \mathrm{d}\m_{f}= \m_{f}(\re \times T^*S^{n-1})$ from the monotone convergence theorem. Since $f_h$ is uniformly bounded, Theorem 2.2 implies $\lim_{n\to \infty} \int_{\re \times T^*S^{n-1}}\chi_n \mathrm{d}\m_{f} \leq C$. This means $\m_{f}(\re \times T^*S^{n-1}) \leq C$, which proves finiteness.

			 Theorem 2.3 implies that $F$ is non-negative if $f_h$ is non-negative. Thus $\m_f$ is positive provided $f_h$ is positive
		\end{proof}

		\begin{lem}
			Let $a\in C^{\infty}(\re\times T^{*}S^{n-1})$ and $f_h$ be an admissible cut-off function. If $a$ and all the derivaties of $a$ are uniformly bounded on $\re\times T^{*}S^{n-1}$, there exist $h_0$ and $C>0$ such that
			\begin{align*}
				& \norm{\mathrm{Op}_{f_h}(a)u}_{L^2(\re^n)}\leq C\left(\norm{f_hu}_{L^2(\re^n)} + h^{\frac12}\norm{u}_{L^2(\re^n)}\right)
			\end{align*}
			for any $u\in L^2(\re^n)$ and $h\in(0,h_0)$.
		\end{lem}
		\begin{proof}
			Let $\tilde a (x,\x)=f_h(r)a(\rho,\th,\frac{\y}{r})$, Then $\abs{\tilde a (x,\x)}^2 \leq C f_h^2$ since $a$ is uniformly bounded on $\re \times T^{*}S^{n-1}$. From Theorem 2.3, we obtain
			\begin{align*}
				& \norm{\mathrm{Op}_{f_h}(a)u}_{L^2(\re^n)}^2\\
				& = \jap{u, \left(\abs{\tilde a}^2\right)^{\mathrm{w}}(hx,D_x)u}_{L^2(\re^n)} +\mathcal{O}(h)\\
				& \leq C\norm{f_hu}_{L^2(\re^n)}^2 +Ch\norm{u}_{L^2(\re^n)}^2,
			\end{align*}
			which concludes the proof.
		\end{proof}

	\subsection{Induced Dynamical System}
	Here we consider the following dynamical system on $\re\times T^*S^{n-1}$, induced by a Hamiltonian flow of $P$. Herbst first proved the contents of this section in \cite{h}, but we write here for the convince.

	Let $H$ be a vector field on $T^*(\re\times T^*S^{n-1})$ defined by
	\begin{align*}
	&  H= q( \th,\y)\pa_{\rho}+(\pa_{\y}q)( \th,\y))\pa_{\th} -((\pa_{\th}q)( \th,\y))+ \{\pa_{\th}V_{\infty})(\th)+2\rho\y\}\pa_{\y},
	\end{align*}
	where $q(\th,\y)={}^t\y h(\th)\y$ is the symbol of Laplacian on $S^{n-1}$.

	The relation of this dynamical system and the Schr\"odinger operator with homogeneous potential is as follows:

	Let $\tilde \Phi_t$ be a Hamiltonian flow generated by the Hamiltonian of $H$.
	For $(r,\rho,\th,\y)\in T^*\re^n$ we write $\tilde \Phi_t(r,\rho,\th,\y)=(\tilde r(t),\tilde \rho(t),\tilde \th(t),\tilde \y(t))$.

	Then $(\tilde r(t),\tilde \rho(t),\tilde \th(t),\tilde \y(t))$ satisfy
	\begin{align*}
	& \frac{\mathrm{d}}{\mathrm{d}t}\tilde r(t) = 2\tilde \rho(t), \frac{\mathrm{d}}{\mathrm{d}t}\tilde \rho(t) = q( \tilde \th(t),\frac{\tilde \y(t)}{\tilde r(t)}),\\
	& \frac{\mathrm{d}}{\mathrm{d}t}\tilde \th(t) = (\pa_{\y}q)( \tilde \th(t),\frac{\tilde \y(t)}{\tilde r(t)}),
	 \frac{\mathrm{d}}{\mathrm{d}t}\tilde \y(t) =-\{(\pa_{\th}q)( \tilde \th(t),\frac{\tilde \y(t)}{\tilde r(t)})+(\pa_{\th}V_{\infty})(\tilde \th(t))\}.
	\end{align*}

	If we take $(\rho(t),\th(t),\y(t))=(\tilde \rho(t),\tilde \th(t),\frac{\tilde \y(t)}{\tilde r(t)})$, we obtain
	\begin{align*}
	& \frac{\mathrm{d}}{\mathrm{d}t}\rho(t) = \tilde r(t)^{-1}q( \th(t), \y(t)),
	\frac{\mathrm{d}}{\mathrm{d}t}\th(t) = \tilde r(t)^{-1}(\pa_{\y}q)( \th(t),\y(t)),\\
	& \frac{\mathrm{d}}{\mathrm{d}t} \y(t) =-\tilde r(t)^{-1}\{(\pa_{\th}q)( \th(t),\y(t)) + (\pa_{\th}V)(\th(t)) +2\rho(t)\y(t)\}.
	\end{align*}

	We assume $\tilde r(t) \neq 0$ and $\tilde r(t)\to\infty$ as $t \to \infty$. We introduce new time $\t$ by $\t=\int^t_0 \tilde r(s)^{-1} \mathrm{d}s$. Then we see
	\begin{align*}
	& \frac{\mathrm{d}}{\mathrm{d}\t}\rho(t) =q( \th(t), \y(t)),
	\frac{\mathrm{d}}{\mathrm{d}\t}\th(t) =(\pa_{\y}q)( \th(t),\y(t)),\\
	& \frac{\mathrm{d}}{\mathrm{d}\t} \y(t) =-\{(\pa_{\th}q)( \th(t),\y(t))+(\pa_{\th}V_{\infty})(\th(t)) +2\rho(t)\y(t)\}.
	\end{align*}
	Thus considering the orbit of $\Phi_t$ corresponds to viewing the orbit of Hamilton flow of $P$.

	In the last of this section, we write $\Phi_t(\rho,\th,\y)=(\rho(t),\th(t),\y(t))$ for $(\rho,\th,\y)\in\re \times T^*S^{n-1}$.

	\begin{lem}
		Total energy $\rho^2+q(\th,\y)+V_{\infty}(\th)$ is conserved.
	\end{lem}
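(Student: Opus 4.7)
The plan is a direct chain-rule computation along an orbit of $\Phi_\tau$. Set $E(\tau):=\rho(\tau)^2+q(\theta(\tau),\eta(\tau))+V(\theta(\tau))$; then
\[
\frac{dE}{d\tau} \;=\; 2\rho\,\dot\rho \;+\; (\partial_\theta q)\cdot\dot\theta \;+\; (\partial_\eta q)\cdot\dot\eta \;+\; (\partial_\theta V)\cdot\dot\theta.
\]
The next step is to substitute the flow equations for $\dot\rho,\dot\theta,\dot\eta$ derived in the preceding subsection and collect terms.

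Two Poisson-bracket-type cancellations occur at once. First, the cross term $(\partial_\theta q)\cdot(\partial_\eta q)$ produced by $\dot\theta = \partial_\eta q$ cancels against the $-(\partial_\eta q)\cdot(\partial_\theta q)$ piece in $(\partial_\eta q)\cdot\dot\eta$. Second, the $(\partial_\theta V)\cdot(\partial_\eta q)$ contribution from $(\partial_\theta V)\cdot\dot\theta$ cancels the corresponding term from $(\partial_\eta q)\cdot\dot\eta$. What is left is purely $\rho$-linear: the $2\rho\,\dot\rho$ coming from differentiating $\rho^2$, and the $-2\rho\,\eta\cdot(\partial_\eta q)$ coming from the $-2\rho\eta$ piece inside $\dot\eta$. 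The decisive ingredient here is Euler's identity: since $q(\theta,\eta)={}^t\eta\,h(\theta)\,\eta$ is homogeneous of degree two in $\eta$, one has $\eta\cdot(\partial_\eta q)(\theta,\eta)=2\,q(\theta,\eta)$. Plugging this in exactly matches the two $\rho$-linear contributions, and $dE/d\tau=0$ along every orbit of $\Phi_\tau$.

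There is essentially no obstacle here; conceptually the lemma is just the familiar fact that a Hamiltonian is preserved along its own flow, transported through the substitution $\tilde\eta\mapsto\tilde\eta/\tilde r$ and the time reparametrization $d\tau=dt/\tilde r$ that was used in Subsection~2.2 to produce $\Phi_\tau$ from the Hamiltonian flow of $P$ on $T^*\re^n$ written in polar coordinates. The only point requiring a moment of care is tracking the factor of two supplied by Euler's identity when reconciling the $2\rho\,\dot\rho$ term with the $-2\rho\,\eta\cdot\partial_\eta q$ term.
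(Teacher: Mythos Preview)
Your proposal is correct and takes essentially the same approach as the paper: both compute $dE/d\tau$ by the chain rule, substitute the reduced flow equations, and record the resulting cancellations. You are slightly more explicit in naming Euler's homogeneity identity $\eta\cdot\partial_\eta q=2q$ for the final step, whereas the paper simply writes out the three groups of terms and asserts the sum vanishes.
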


	\begin{proof}
		Let $E(t)=\rho(t)^2+q(\th(t),\y(t))+V_{\infty}(\th(t))$. Then we see
		\begin{align*}
		& \frac{\mathrm{d}}{\mathrm{d}t}E(t)\\
		& =2\rho(t)q(\th(t),\y(t)) + \{(\pa_{\th}q)( \th(t),\y(t))+(\pa_{\th}V_{\infty})(\th(t))\}(\pa_{\y}q)( \th(t),\y(t))\\
		& - (\pa_{\y}q)( \th(t),\y(t))\{(\pa_{\th}q)( \th(t),\y(t))+(\pa_{\th}V_{\infty})(\th(t)) +2\rho(t)\y(t)\}\\
		& =0,
		\end{align*}
		which concludes the proof.
	\end{proof}

	\begin{lem}
		$\displaystyle\lim_{t\to\infty}\rho(t)$ and $\displaystyle\lim_{t\to-\infty}\rho(t)$ exist.
	\end{lem}
	\begin{rem*}
		Since $\frac{\mathrm{d}}{\mathrm{d}t}\rho(t)=q(\th(t),\y(t))$, $q(\th(t),\y(t))$ is integrable on $(0,\infty)$.
	\end{rem*}
	\begin{proof}
		We prove when $t\to\infty$. One can prove the assertion when $t\to-\infty$ similarly.

		Since $\frac{\mathrm{d}}{\mathrm{d}t}\rho(t)=q(\th(t),\y(t)) >0$, $\rho(t)$ is monotone increasing. From Lem2.4, $\rho(t)^2\leq \rho(t)^2 + q(\th(t),\y(t)) = E(0)-V_{\infty}(\th(t))$. Since $V_{\infty}$ is bounded, so is $\rho(t)$. Thus $\rho(t)$ is monotone increasing and bounded and has a limit.
	\end{proof}

	\begin{lem}
			$q(\th(t),(\pa_{\th}V_{\infty})(\th(t)))$ is integrable on $(0,\infty)$ with respect to $t$.
	\end{lem}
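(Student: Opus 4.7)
The plan is to derive an integral identity for $q(\th(t),(\pa_{\th}V)(\th(t)))$ by using the equation of motion for $\y(t)$ to eliminate one factor of $\pa_{\th}V$, and then to close a self-bootstrap estimate using $q(\th,\y)\in L^1(0,\infty)$ (the remark after Lemma 2.4).

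The third equation of the rescaled system, $\dot\y = -\pa_{\th}q - \pa_{\th}V - 2\rho \y$, rearranges to $\pa_{\th}V(\th) = -\dot\y - (\pa_{\th}q)(\th,\y) - 2\rho \y$. Substituting this for one factor in $q(\th,\pa_{\th}V) = {}^t(\pa_{\th}V)\,h(\th)\,(\pa_{\th}V)$ and integrating from $0$ to $T$, I obtain $\int_0^T q(\th,\pa_{\th}V)\,\mathrm{d}t = I_1(T)+I_2(T)+I_3(T)$, where the three pieces come respectively from $-\dot\y$, $-\pa_{\th}q$, and $-2\rho\y$. By energy conservation (Lemma 2.3), $\rho$, $\y$, $\pa_{\th}q = {}^t\y\,\pa_{\th}h\,\y$, and $\pa_{\th}V$ are all uniformly bounded along the orbit.

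For $I_2$, the integrand is quadratic in $\y$, hence $\leq C\,q(\th,\y)$, which is integrable. For $I_3$, Cauchy--Schwarz with respect to the inner product induced by $h$, followed by Cauchy--Schwarz in $t$, gives
\begin{align*}
|I_3(T)| \leq 2\sup|\rho|\,\Bigpare{\int_0^T q(\th,\y)\,\mathrm{d}t}^{1/2}\Bigpare{\int_0^T q(\th,\pa_{\th}V)\,\mathrm{d}t}^{1/2}.
\end{align*}
For $I_1$, I integrate by parts in $t$: the boundary term $[{}^t\y\,h(\th)\,\pa_{\th}V(\th)]_0^T$ is bounded, and the resulting bulk integral has integrand ${}^t\y\,\tfrac{\mathrm{d}}{\mathrm{d}t}[h(\th(t))\pa_{\th}V(\th(t))]$, which via the chain rule factors through $\dot\th = (\pa_{\y}q)(\th,\y) = 2h(\th)\y$ and is therefore quadratic in $\y$. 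It is thus bounded by $C\,q(\th,\y)$ and integrable.

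Writing $I(T) = \int_0^T q(\th,\pa_{\th}V)\,\mathrm{d}t$, the above estimates yield $I(T) \leq A + B\sqrt{I(T)}$ for constants $A,B>0$ independent of $T$. Solving the resulting quadratic inequality in $\sqrt{I(T)}$ produces a uniform bound $I(T)\leq K$, and letting $T\to\infty$ completes the proof. The main obstacle is the control of $I_1$: a naive pointwise estimate of $\dot\y$ in terms of $\pa_{\th}V$ would be circular, so the crucial step is the integration by parts that converts $\dot\y$ into the $t$-derivative of $h\,\pa_{\th}V$ along the orbit, introducing only the linear-in-$\y$ factor $\dot\th$ and thereby breaking the circularity.
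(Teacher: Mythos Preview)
Your proof is correct and follows essentially the same approach as the paper: both arguments hinge on the auxiliary quantity $F(t)=-{}^t(\pa_{\th}V)\,h(\th)\,\y$, whose time derivative isolates $q(\th,\pa_{\th}V)$ up to terms controlled by $q(\th,\y)$ (you arrive at $F$ via integration by parts on $I_1$, the paper by direct differentiation). Your treatment is in fact slightly cleaner in two places: you bound the boundary term directly via the energy conservation bound on $\y$ rather than through the paper's subsequence argument, and you handle the cross term $I_3$ by Cauchy--Schwarz in $t$ plus a quadratic inequality rather than a pointwise Young inequality.
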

	\begin{proof}
		Let $F(t)=-{}^t(\pa_{\th}V_{\infty}(\th(t)))h(\th(t))\y(t)$. Then we obtain
		\begin{align*}
		& \frac{\mathrm{d}}{\mathrm{d}t}F(t)\\
		& = -{}^t(\mathrm{Hess}(V_{\infty})(\th(t))(\pa_{\y}q)( \th(t),\y(t)))h(\th(t))\y(t))\\
		& - {}^t(\pa_{\th}V_{\infty}(\th(t)))\{(\pa_{\th}h(\th(t)))(\pa_{\y}q)( \th(t),\y(t))\}\y(t)\\
		& + {}^t(\pa_{\th}V_{\infty}(\th(t)))h(\th(t))\{(\pa_{\th}q)( \th(t),\y(t))+(\pa_{\th}V_{\infty})(\th(t)) +2\rho(t)\y(t)\}
		\end{align*}
		Thus there exists $C>0$ such that
		\begin{align*}
		%& \frac{\mathrm{d}}{\mathrm{d}t}F(t) > Cq(\th(t),(\pa_{\th}V)(\th(t))) -Cq(\th(t),\y(t))\\
		& \frac{\mathrm{d}}{\mathrm{d}t}F(t) +Cq(\th(t),\y(t)) > Cq(\th(t),(\pa_{\th}V_{\infty})(\th(t))).
		\end{align*}
		By integrating this inequality from $t=0$ to $t=T$, we obtain
		\begin{align*}
		& F(T)-F(0) +C\int^{T}_0 q(\th(t),\y(t))\mathrm{d}t > C\int^{T}_0q(\th(t),(\pa_{\th}V)(\th(t)))\mathrm{d}t.
		\end{align*}

		Since $q(\th(t),(\pa_{\th}V)(\th(t)))\geq0$ it is sufficient to show there exists a sequence $T_j$ such that $T_j\to\infty$ as $j\to\infty$ and $\{F(T_j)\}$ has upper bound.

		From the definition of $q$, we obtain
		\begin{align*}
		& \abs{F(t)}\leq q(\th(t),\y(t)) +q(\th(t),(\pa_{\th}V_{\infty})(\th(t)) .
		\end{align*}
		Since the second term is bounded, we only have to show that the first term is bounded for some $\{T_j\}$.
		Since first term is integrable, there exists a sequence $\{T_j\}$ such that there exist $C>0$ such that $q(\th(t),\y(t))<C$, which completes the proof.

	\end{proof}

	\begin{thm}
		$\lim_{t\to\infty}(\pa_{\th}V_{\infty})(\th(t)) = \lim_{t\to\infty}\y(t)= 0$.
	\end{thm}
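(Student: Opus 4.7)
The plan is to reduce the statement to a standard Barbalat-type argument applied to the two non-negative scalar functions $t\mapsto q(\th(t),\y(t))$ and $t\mapsto q(\th(t),(\pa_{\th}V)(\th(t)))$, both of which are integrable on $(0,\infty)$ by the remark following Lemma 2.4 and by Lemma 2.5 respectively.

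First I would establish an \emph{a priori} boundedness of the whole trajectory. The angular variable $\th(t)$ lives on the compact manifold $S^{n-1}$; by Lemma 2.4 the momentum $\rho(t)$ is monotone and bounded; and from energy conservation (Lemma 2.3) we have
\[
q(\th(t),\y(t))\leq E(0)-V(\th(t))\leq E(0)+\norm{V}_{L^{\infty}(S^{n-1})}.
\]
Since $h(\th)$ is smooth and uniformly positive-definite on the compact $S^{n-1}$, this yields a uniform bound on $\abs{\y(t)}$. Consequently $(\rho(t),\th(t),\y(t))$ stays in a fixed compact subset of $\re\times T^{*}S^{n-1}$ for all $t\geq 0$.

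Next I would differentiate both integrable quantities along the flow and check that the derivatives are uniformly bounded in $t$. For the first, substituting the equations of motion and cancelling the $(\pa_{\th}q)(\pa_{\y}q)$ contribution gives
\[
\frac{\mathrm{d}}{\mathrm{d}t}q(\th(t),\y(t)) = -(\pa_{\y}q)(\th(t),\y(t))\bigl((\pa_{\th}V)(\th(t))+2\rho(t)\y(t)\bigr),
\]
which is bounded by the compactness established above plus smoothness of $V$ and $h$. For the second, one expands $\frac{\mathrm{d}}{\mathrm{d}t}q(\th(t),(\pa_{\th}V)(\th(t)))$ via the chain rule; each factor is either evaluated at $\th(t)\in S^{n-1}$ (hence bounded by smoothness of $V$, $\mathrm{Hess}\,V$, and $h$) or is $\dot{\th}(t)=(\pa_{\y}q)(\th(t),\y(t))$, which is bounded because $\y(t)$ is. Therefore both functions are non-negative, integrable on $(0,\infty)$, and uniformly continuous, so a standard Barbalat-type lemma forces each of them to tend to $0$ as $t\to\infty$.

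Finally, the uniform positive-definiteness of $h(\th)$ on the compact sphere upgrades $q(\th(t),\y(t))\to 0$ to $\y(t)\to 0$, and $q(\th(t),(\pa_{\th}V)(\th(t)))\to 0$ to $(\pa_{\th}V)(\th(t))\to 0$. The only conceptual point is the \emph{a priori} bound on $\abs{\y(t)}$ drawn from energy conservation, which is what allows one to cheaply control the two time-derivatives; the rest is bookkeeping together with the classical integrable-plus-uniformly-continuous criterion.
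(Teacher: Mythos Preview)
Your argument is correct and cleaner than the paper's, though the underlying machinery is the same (Lemmas 2.3--2.5). The essential difference is the way each of you passes from integrability of a non-negative function to convergence to zero. You first extract the \emph{a priori} compactness of the full trajectory from energy conservation, which gives a uniform bound on $\abs{\y(t)}$; then both $t\mapsto q(\th(t),\y(t))$ and $t\mapsto q(\th(t),(\pa_{\th}V)(\th(t)))$ have bounded time-derivatives, hence are uniformly continuous, and Barbalat's lemma finishes. The paper instead argues that the time-derivatives of these quantities are themselves \emph{integrable} on $(0,\infty)$ (each term in the derivative carries a factor of $\y$ and a factor of $\pa_{\th}V$, so it is controlled by $q(\th,\y)+q(\th,\pa_{\th}V)$, both integrable); this gives existence of the limits directly, and integrability forces them to be zero. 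For $q(\th(t),\y(t))$ the paper takes an indirect route: it first shows $V(\th(t))$ converges by bounding $\frac{\mathrm{d}}{\mathrm{d}t}V(\th(t))$ with integrable terms, and then reads off the convergence of $q(\th(t),\y(t))$ from the identity $q=E-\rho^{2}-V$ together with Lemma 2.4. Your direct Barbalat route avoids this detour entirely; the paper's route, on the other hand, yields the slightly stronger intermediate information that $V(\th(t))$ itself converges, without explicitly invoking the bound on $\abs{\y(t)}$.
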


	\begin{proof}
		Let $G(t)=q\left(\th(t),(\pa_{\th}V_{\infty})(\th(t))\right)$. Then we obtain
		\begin{align*}
		& \frac{\mathrm{d}}{\mathrm{d}t}G(t)\\
		& = 2{}^t(\mathrm{Hess}(V_{\infty})(\th(t))(\pa_{\y}q)( \th(t),\y(t)))h(\th(t))(\pa_{\th}V_{\infty})(\th(t))\\
		& + {}^t(\pa_{\th}V_{\infty}(\th(t)))\{(\pa_{\th}h(\th(t)))(\pa_{\y}q)( \th(t),\y(t))\}(\pa_{\th}V_{\infty})(\th(t)).
		\end{align*}
		Similar to the proof of Lemma 2.7, one can prove that the right-hand side is integrable, and $\lim_{t\to\infty}G(t)$ exists. Since $G(t)$ is integrable, this limit should be zero.

		Since
		\begin{align*}
		& \frac{\mathrm{d}}{\mathrm{d}t}V_{\infty}(\th(t))\\
		& = {}^t(\pa_{\th}V_{\infty}(\th(t)))h(\th(t))(\pa_{\y}q)( \th(t),\y(t))\\
		& \leq q(\th(t),(\pa_{\th}V_{\infty})(\th(t)) + q(\th(t),\y(t))
		\end{align*}
		$\lim_{t\to\infty}V_{\infty}(\th(t))$ exists. From Lem 2.5, $q(\th,\y(t))=E-\rho^2-V(\th(t))$ for some constant $E$. Since right hand side has limit as $t\to\infty$, $\lim_{t\to\infty}q(\th,\y(t))$ exists. Then integrability of $q(\th,\y(t))$ yields this limit is zero.
	\end{proof}

		One can prove simlar statement when $t\to\infty$ in a similar way.
		\begin{cor}
			$\lim_{t\to-\infty}(\pa_{\th}V_{\infty})(\th(t)) = \lim_{t\to-\infty}\y(t)= 0$.
	\end{cor}

	\section{Proof of Theorem 1.2 and Theorem 1.4}

	We first prepare a lemma and Theorem to prove Theorem 1.2.

	\begin{lem}
		Let $a \in C^{\infty}_0(\re \times T^*S^{n-1})$, then one obtains the following:
		\begin{align*}
		&[\OP_{f_h}(a),P]=\frac{h}{i} \{f_h(r)a(\rho,\th,\frac{\y}{r}),\rho^2+q(\th,\frac{\y}{r})+V_{\infty}(\th)\}^{\mathrm{w}}(hX,D_X)+ E_h
		\end{align*}
		as $h \to 0$, where $E_h$ is a family of pseudodifferential operator on $L^2(\re^n)$ depending on $h$ such that $\norm{E}_{\mathcal{L}(L^2(\re^n))}=o(h)$ as $h \to 0$. We note that $\{\cdot,\cdot\}$ denotes Poisson bracket.
	\end{lem}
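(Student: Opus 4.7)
The plan is to transport the commutator to semiclassical Weyl calculus via the conjugation (2.1), split $P$ into its three natural pieces, and handle each by either the Moyal expansion or a disjoint-support argument.

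Since $U_h$ is unitary, I begin with
\[
[\OP_{f_h}(a),P] \;=\; U_h^{-1}\bigl[\tilde a_h^w(X,hD_X),\, U_h P U_h^{-1}\bigr]U_h,
\]
and a direct dilation computation gives $U_h P U_h^{-1} = -h^2\Delta + V_\infty(\cdot/h) + V_s(\cdot/h)$. Reading the conjugation formula (2.1) backwards as $U_h^{-1} b^w(X, hD_X) U_h = b^w(hX, D_X)$, it suffices to prove that the inner commutator equals
\[
\frac{h}{i}\bigl\{\tilde a_h,\; |\xi|^2 + V_\infty(\hat x)\bigr\}^w(X, hD_X) + o(h)
\]
in operator norm, and then to rewrite $|\xi|^2 = \rho^2 + q(\th, \y/r)$ and $V_\infty(\hat x) = V(\th)$ in polar coordinates.

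For the Laplacian piece the symbol $|\xi|^2$ is quadratic in $\xi$ with no $x$-dependence, so the semiclassical Moyal commutator terminates after its first term and yields exactly $\frac{h}{i}\{\tilde a_h, |\xi|^2\}^w(X, hD_X)$. For $V_\infty(\cdot/h)$ I would introduce a cutoff $\chi \in C^\infty(\re^n)$ equal to $1$ on $\{|x|\geq\e\}$ and vanishing near the origin, so that $V_\infty(\hat x)\chi(x) \in S$. The difference $V_\infty(x/h) - V_\infty(\hat x)\chi(x)$ is supported in $\{|x|\leq\e\}$ once $h<\e$, which is disjoint from the $x$-support of $\tilde a_h$ and of all its derivatives; hence the Moyal expansion of the associated composition vanishes pointwise at every order, and the corresponding commutator is $O(h^\infty)$ in operator norm. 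Standard semiclassical symbol calculus applied to $V_\infty(\hat x)\chi\in S$ then produces $\frac{h}{i}\{\tilde a_h, V_\infty(\hat x)\chi\}^w + O(h^3)$, and by the same support considerations this bracket coincides with $\{\tilde a_h, V_\infty(\hat x)\}$.

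The main obstacle is the $V_s(\cdot/h)$ contribution, for which no useful derivative estimates are available and a Moyal expansion cannot be applied directly. My plan is to split $V_s(\cdot/h) = \chi_1(x)V_s(x/h) + (1-\chi_1(x))V_s(x/h)$, with $\chi_1 \in C^\infty$ equal to $1$ on $\{|x|\geq\e/2\}$ and supported in $\{|x|\geq\e/4\}$. For the first piece, the hypothesis $V_s(y) = o(|y|^{-1})$ implies that on $\{|x|\geq\e/2\}$, for every $\eta>0$ and all sufficiently small $h$, $|V_s(x/h)| \leq \eta h/|x| \leq 2\eta h/\e$; hence $\|\chi_1 V_s(\cdot/h)\|_{L^\infty} = o(h)$, and the trivial bound $\|[\tilde a_h^w, \chi_1 V_s(\cdot/h)]\| \leq 2\|\tilde a_h^w\|\,\|\chi_1 V_s(\cdot/h)\|_{L^\infty}$ together with Theorem~2.1 gives operator norm $o(h)$. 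For the second piece, sandwich $(1-\chi_1)V_s(\cdot/h)$ between further smooth cutoffs supported in $\{|x|\leq 3\e/4\}$, disjoint from the support of $\tilde a_h$; the disjoint-support argument then makes the compositions $\tilde a_h^w\cdot [(1-\chi_1)V_s(\cdot/h)]$ and $[(1-\chi_1)V_s(\cdot/h)]\cdot \tilde a_h^w$ each $O(h^\infty)$, regardless of how large the derivatives of $V_s(\cdot/h)$ may be. Assembling the three contributions and conjugating back by the unitary $U_h$ then yields the claim.
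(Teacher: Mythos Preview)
Your argument is correct and mirrors the paper's proof: split $P$ into $-\Delta$, the homogeneous part $V_\infty$, and the short-range part $V_s$; handle the first two by semiclassical symbol calculus after conjugation by $U_h$ (the paper cites Theorem~4.18 of \cite{zws}), use $\|k(hr)V_s\|_{L^\infty}=o(h)$ for the far piece of $V_s$, and treat the region near the origin by a disjoint-support argument. One small technical point: for the difference $V_\infty(\cdot/h)-V_\infty(\hat x)\chi$ you cannot literally invoke the Moyal expansion with remainder, since this difference is not in $S$ uniformly in $h$; insert a compactly supported cutoff exactly as you do for $(1-\chi_1)V_s(\cdot/h)$ --- this is precisely what the paper does, first proving $\OP_{f_h}(a)(1-k(hr))=\mathcal{O}(h^3)$ via disjoint supports in $S$ and then using that $V$ is a bounded multiplication operator.
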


	\begin{proof}
		From equality (2.1), one can directly obtain the assertion for $-\bigtriangleup$ from Theorem 4.18 in \cite{zws} i.e.
		\begin{align*}
		&[\OP_{f_h}(a),-\bigtriangleup]=\frac{h}{i} \{f_h(r)a(\rho,\th,\frac{\y}{r}),\rho^2+q(\th,\frac{\y}{r})\}^{\mathrm{w}}(hX,D_X)+ \mathcal{O}(h^3).
		\end{align*}

		Let $\k\in C^{\infty}(\re)$ be such that $\k(x)=1$ if $x>\e$ and $\k(x)=0$ if $x<\frac{\e}2$, where $\e>0$ is taken so that $f_h(x)=0$ if $x<\e$. Then we can calculate as follows:
		\begin{align*}
		&[\OP_{f_h}(a),V]=[\OP_{h,c}(a),\k(hr)(V_{\infty}+V_s)] + [\OP_{f_h}(a),\{1-\k(hr)\}V].
		\end{align*}

		Since $V_{\infty}$ is homogeneous of order zero, $k(h\abs{x})V_{\infty}(x)=\tilde V(hx)$ is a smooth and bounded function on $C^{\infty}(\re^n)$.
		Then one can obtain equality similarly to the case of $-\bigtriangleup$ from (2.1).

		Concerning $V_s$, one can calculate $\norm{k(hr)V_s}_{\mathcal{L}(L^2(\re^n))}=o(h)$ as $h\to0$ from the definition of $V_s$. Thus $[\OP_{f_h}(a),j(2c(h)hr)V_s]=o(h)$ as $h\to0$ from Theorem 2.2.

		Next we claim that $\OP_{f_h}(a)\{1-\k(hr)\}=\mathcal{O}(h^3)$.  Let $\tilde \k(x)=1-\k(\abs{x})$, then $\tilde \k \in C^{\infty}_0(\re^n)$ from the definition of $k$. By conjugating semiclassical dilation $U_h$, one can calculate as follows:
		\begin{align*}
		&\OP_{f_h}(a)\tilde \k(hx)\\
		& = \{f_h(r)a(\rho,\th,\frac{\y}{r})\}^{\mathrm{w}}(hx,D_x)\tilde \k(hx)\\
		& = U_h^* \{f_h(r)a(\rho,\th,\frac{\y}{r})\}^{\mathrm{w}}(x,hD_x) \tilde \k(x)U_h.
		\end{align*}

		Since $\mathrm{supp}(f_h(r)a(\rho,\th,\frac{\y}{r}))\cap\mathrm{supp}(\tilde \k(x)) = \phi$, Theorem 4.18 in \cite{zws} implies the claim.

		Since multiplication operator by $V$ is uniformly bounded in $h$, the claim implies $[\OP_{f_h}(a),\{1-\k(hr))\}V]=\mathcal{O}(h^3)$, which concludes the proof.

	\end{proof}

	\begin{thm}(Energy conservation)\\
		Let $f_h$ be an admissible cutoff function and let $u_h\in\mathcal{D}(P)$ be such that
		\begin{eqnarray*}
		\left\{
		\begin{array}{l}
		(P-E)u_h=R_h\\
		\norm{u_h}_{L^2(\re^n)}=1,
		\end{array}
		\right.
		\end{eqnarray*}
		where $\norm{R_h}_{L^2(\re^n)}=o(1)$ as $h\to0$. Under Assumption A, the support of $\m_{f}$ is localized in energy surfaces in the following meaning:
		\begin{align*}
		& \mathrm{supp}(\m_{f}) \subset
		\{(\rho,\th,\y)\in \re \times T^*S^{n-1} \mid \rho^2 + q(\th,\y) + V_{\infty}(\th)=E\}.
		\end{align*}

	\end{thm}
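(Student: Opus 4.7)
The plan is to mimic the standard argument for energy localization of semiclassical defect measures, adapted to the quantization $\OP_{f_h}$. The reduction is: given $a \in C^{\infty}_0(\re \times T^*S^{n-1})$ whose support is disjoint from the energy surface $\bra{(\rho,\th,\y) \mid \rho^2 + q(\th,\y) + V_{\infty}(\th) = E}$, it suffices to show $\jap{u_h, \OP_{f_h}(a)u_h} \to 0$ along the defining subsequence. Since $p_E(\rho,\th,\y) := \rho^2 + q(\th,\y) + V_{\infty}(\th) - E$ is nowhere vanishing on $\mathrm{supp}(a)$, I would divide smoothly and obtain $b \in C^{\infty}_0(\re \times T^*S^{n-1})$ with $a = p_E \cdot b$.

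The core of the argument will be to establish the operator identity
\begin{align*}
\OP_{f_h}(a) = \OP_{f_h}(b)\,(P - E) + E_h, \qquad \norm{E_h}_{\bounded(L^2(\re^n))} = o(1) \text{ as } h\to 0.
\end{align*}
I would prove this by applying the Weyl composition formula after the unitary conjugation (2.1) brings everything into the standard semiclassical Weyl calculus at parameter $h$. The principal symbol of $\OP_{f_h}(b)(P-E)$ is the product of $f_h(r)\,b(\rho,\th,\tfrac{\y}{r})$ with the symbol of $P - E$ expressed in polar coordinates, which on the support of $f_h$ reads $\rho^2 + q(\th,\y)/r^2 + V_{\infty}(\th) + V_s(x) - E$. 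Using $q(\th,\y)/r^2 = q(\th,\y/r)$ together with the factorization $a = p_E b$, the product of $f_h\cdot b$ with the first four terms reassembles $\tilde a_h(x,\x) = f_h(r)a(\rho,\th,\y/r)$ and hence reproduces $\OP_{f_h}(a)$ to leading order. The $V_s$ contribution is disposed of exactly as in the proof of Lemma 3.1: on $\mathrm{supp}(f_h)$ one has $r \geq \e$, so in the physical variable one is in the regime $|X| \geq \e/h$, where $V_s(X) = o(h)$ uniformly, giving an $o(h)$ operator-norm contribution. The subprincipal Weyl corrections carry a prefactor $h$ with symbols in $S$, so they contribute $O(h)$ in operator norm by Calderon-Vaillancourt (Theorem 2.1). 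Combining these estimates yields $\norm{E_h} = o(1)$.

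With the identity in hand, the quasimode equation $(P-E)u_h = R_h$ gives
\begin{align*}
\jap{u_h,\OP_{f_h}(a)u_h} = \jap{u_h,\OP_{f_h}(b)R_h} + \jap{u_h,E_h u_h},
\end{align*}
and both terms are $o(1)$: the first by Cauchy-Schwarz together with $\norm{\OP_{f_h}(b)}_{\bounded(L^2(\re^n))} = O(1)$ (Theorem 2.1), $\norm{u_h}_{L^2(\re^n)} = 1$, and $\norm{R_h}_{L^2(\re^n)} = o(1)$; the second directly from $\norm{E_h} = o(1)$. Taking $h\to0$ along the defining subsequence gives $\int a\,\mathrm{d}\m_f = 0$ for every such test function, which is the claimed support statement. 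The main obstacle I anticipate is the book-keeping in the composition step: one must carefully separate $V$ into the bounded homogeneous piece $V_{\infty}$ (whose symbol is smooth away from the origin) and the decaying piece $V_s$ (which gains the $o(h)$ factor from the support condition), handle the near-origin region using the cutoff $f_h$, and track that each piece really contributes an $o(1)$ operator-norm error after Weyl composition. This parallels the treatment in Lemma 3.1 but in the product rather than commutator setting.
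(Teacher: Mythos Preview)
Your proposal is correct and follows essentially the same route as the paper: both arguments rest on the Weyl composition of $\OP_{f_h}(\cdot)$ with $P-E$, treating $V_{\infty}$ as a smooth bounded symbol after the rescaling $(2.1)$ and disposing of $V_s$ via the support condition on $f_h$ exactly as in Lemma~3.1. The only cosmetic difference is that the paper tests with an arbitrary $a$ and concludes $\int a\,(\rho^2+q(\th,\y)+V_{\infty}(\th)-E)\,\mathrm{d}\m_f=0$ directly (avoiding the division $b=a/p_E$), whereas you factor $a=p_E b$ first; the underlying operator identity and error estimates are the same.
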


	\begin{proof}
		Since $(P-E)u_h=o(1)$, one can calculate
		\begin{align*}
		&o(1)=\jap{u_h,\OP_{f_h}(a)(P-E)u_h}_{L^2(\re^n)}\\
		& =\jap{u_h,\{f_h(r)a(\rho,\th,\y)(\rho^2+q(\th,\frac{\y}{r})+V_{\infty}(\th)-E)\}^{\mathrm{w}}(hX,D_X)u_h}_{L^2(\re^n)} + o(1)
		\end{align*}
		as $h\to0$ where we have used the fact that $\norm{\OP_{f_h}(a)V_s}_{\mathcal{L}(L^2(\re^n))}=o(1)$ as $h\to0$.

		Therefore, if we take a suitable subsequence $h_m$ and $m\to0$, we obtain $\int_{\re \times T^*S^{n-1}}a(\rho^2+q+V-E) \mathrm{d}\m_{f} =0$, which concludes the proof.
		\end{proof}

	 	\begin{lem}
	 		Under the same setting in Theorem 3.2, if $(1-f_h)u_h\to0$ as $h\to 0$, $\m_f>0$.
	 	\end{lem}

	 	\begin{proof}
	 		We prove by contradiciton. Assume $\m_f=0$.

	 		Since $\mathfrak{E}=\{(\rho,\th,\y)\in \re \times T^*S^{n-1} \mid \rho^2 + q(\th,\y) + V_{\infty}(\th)=E\}$ is compact, we can find $a\in C^{\infty}_0(\re\times T^{*}S^{n-1}:[0,1])$ such that $a=1$ on $\mathfrak{E}$.

	 		Then
	 		\begin{align*}
	 			& f_h-\mathrm{Op}_{f_h}(a) = -\mathrm{Op}_{f_h}(1-a) + E_h,
	 		\end{align*}
	 		where $\norm{E_h}=_{\mathcal{L}(L^2(\re^n))}=\mathcal{O}(h)$ as $h\to0$.

	 		Since $\abs{\rho^2 + q(\th,\y) + V_{\infty}(\th)-E}>\e$ for some $\e>0$ on supp$\left(1 - a \right)$,
			$b(\rho ,\th ,\y)=\left(1 - a(\rho,\th,\y)\right)(\rho^2 + q(\th,\y) +V_{\infty}(\th)-E)^{-1}$ and all  the derivatives of $b$ are uniformly bounded on $\re\times T^{*} S^{n-1}$.

	 		From  Assumption A, Theorem 2.2 and Lemma 2.4, we obtain
	 		\begin{align*}
	 			& \norm{\left(f_h-\mathrm{Op}_{f_h}(a)\right)u_h}_{L^2(\re^n)} \\
	 			&\leq  \norm{\mathrm{Op}_{f_h}(b)(-\bigtriangleup +V_{\infty}-E)u_h}_{L^2(\re^n)} + \mathcal{O}(h)\\
				& \leq  C\norm{f_h(-\bigtriangleup +V_{\infty}-E)u_h}_{L^2(\re^n)} + \mathcal{O}(h^{\frac12})\\
				& \leq \tilde C\norm{f_h(-\bigtriangleup +V-E)u_h}_{L^2(\re^n)}+ \mathcal{O}(h^{\frac12}) =\tilde C\norm{R_h}_{L^2(\re^n)}+ \mathcal{O}(h^{\frac12})
	 		\end{align*}
	 		as $h\to0$, where $C$ and $\tilde C$ are constants independent in $h$. Thus
			\begin{align*}
				& \left(f_h-\mathrm{Op}_{f_h}(a)\right)u_h\to 0
			\end{align*}
			as $h\to0$.

			Since $\m_f=0$,
	 		\begin{align*}
	 			& \displaystyle\lim_{h\to0}\norm{\mathrm{Op}_{f_h}(a)u_h}^2_{L^2(\re^n)} = \displaystyle\lim_{h\to0}\jap{u_h, \mathrm{Op}_{f_h}(a^2)u_h}^2_{L^2(\re^n)}=\int_{\re\times T^{*}S^{n-1}}a^2\mathrm{d}\m_{f}=0.
	 		\end{align*}
	 		Thus $\displaystyle\lim_{h\to0}\norm{u_h}=\displaystyle\lim_{h\to0}\norm{f_hu_h}=0$, which is a contradiction and we conclude the assertion.
	 	\end{proof}

		Recall $\tilde h =c_2(h)\to0$ and $h\to0$ are equivalent. Let  $j$ be the same with that in Section 1. We denote $j_{\e}(x)=j(\e x)$.

		Since $\tilde h=c_2(h)$ is monotone increasing, function in $h$ can be regarded as a function in $\tilde h$.
	 Let $c_3(\tilde h)=\max\{\norm{R_h}^{\frac12}c_1(h)^{\frac12}, c_1(h)^{\frac{2}{\d}}\}$, $c_4(\tilde h) = \frac{c_1(h)^2}{c_3(\tilde h)\tilde h}$ and $c_5(\tilde h)=\left(\frac{c_3(\tilde h)}{c_1(h)}\right)^2$.

	 We define
	 \begin{align*}
		 & \chi_{\tilde h}(x)= \left\{ \begin{array}{l}
		 j(2x)\left(1-j(-\frac{\log (c_4(\tilde h)x)}{\log c_5(\tilde h)})\right) \quad 0<x\\
		 0 \qquad \qquad \qquad \qquad \qquad \qquad x\leq0
		\end{array}\right.
	 \end{align*}
	 for $x\in\re$.

	 \begin{lem}
		 (1) If $x\in \mathrm{supp} \chi_{\tilde h}$, $x<\frac{\tilde h}{c_3(\tilde h)}$. \\
		 (2) $\frac12<x<\frac{\tilde h}{c_1(\tilde h)}$ implies $\chi_{\tilde h}(x)=1$.\\
		 (3) $ \chi_{\tilde h} (\tilde hx)\tilde j_{h}(x) =\tilde j_{h} (x)$.

	 \end{lem}
	 \begin{proof}
		  $x\in \mathrm{supp} \chi_{\tilde h}$ implies $\log{ (c_4(\tilde h)x) } < - \log c_5(\tilde h) $ by the definition of $\chi_{\tilde h}$. This implies $x< \frac1{c_4(\tilde h)c_5(\tilde h)} $.
		 Since $c_4(\tilde h)c_5(\tilde h)=\frac{c_3(\tilde h)}{\tilde h} $, the assertion of (1) follows.
		 The assertion of (2) follows similarly.

	 If $x\in\mathrm{supp}\tilde j_h$, $\frac12 \tilde h ^{-1}< x < c_1(h)^{-1}$. From (1) in this Lemma, we obtain $\chi_{\tilde h}( \tilde h x)=1$ if $x\in\mathrm{supp}\tilde j_h$, which concludes the proof.

	 \end{proof}

	 \begin{prop}
		 $ \chi_{\tilde h}$ is an admissible cut-off function.
	 \end{prop}

	 \begin{proof}
			From the definition of $j$ and $c_4,c_5$, for any $m\in\na\setminus\{1\}$, $j^{(m)}(2x)\neq 0$ implies $x\in(\frac12,1)$ and $j^{(m)}\left(-\frac{\log (c_4(\tilde h)x)}{\log c_5(\tilde h)} \right)\neq 0$ implies $x\in\left(\frac{\tilde h}{c_1(h)},\frac{\tilde h}{c_3(\tilde h)}\right)$.
			Thus $j^{(m)}(2x)j^{(m')}\left(-\frac{\log (c_4(\tilde h)x)}{\log c_5(\tilde h)} \right)=0$ if $m,m'\geq 1$ for sufficiently small $\tilde h$.

			We next claim that
 			\begin{align}
 				\notag	& \frac{\mathrm{d}^m}{\mathrm{d}x^m}\chi_{\tilde h}(x) = 2^mj^{(m)}(2x)\left(1-j\left(-\frac{\log (c_4(\tilde h)x)}{\log c_5(\tilde h)} \right)\right)\\
				&   +j(2x) x^{-m}\sum^m_{\a=1} C_{m,\a,\tilde h}  j^{(\a)}\left(-\frac{\log (c_4(\tilde h)x)}{\log c_5(\tilde h)} \right)
			\end{align}
			where $C_{m,\a,\tilde h}=\mathcal{O}\left(\left(-\frac{1}{\log c_5(\tilde h)}\right)^{-m}\right)$ is only depending on $m,\a,\tilde h$ and uniformly bounded in $\tilde h$. We prove this claim by induction in $m$.

			When $m=1$, we immediately obtain the followings by direct calculation:
			\begin{align*}
				& \frac{\mathrm{d}}{\mathrm{d}x}\chi_{\tilde h}(x)\\
				& = 2j'(2x)\left(1-j\left(-\frac{\log (c_4(\tilde h)x)}{\log c_5(\tilde h)} \right)\right)   -\frac{1}{\log c_5(\tilde h)} j(2x)x^{-1} j'\left(-\frac{\log (c_4(\tilde h)x)}{\log c_5(\tilde h)} \right),
 			\end{align*}
			which proves the claim when $m=1$.

 			We next show (3.1) holds for $m=k+1$ if (3.2) holds for $m=k$. We obtain
 			\begin{align*}
 				& \frac{\mathrm{d}^{k+1}}{\mathrm{d}x^{k+1}}\chi_{\tilde h}(x)\\
				& = \frac{\mathrm{d}}{\mathrm{d}x}\left\{\frac{\mathrm{d}^{k}}{\mathrm{d}x^{k}}\chi_{\tilde h}(x)\right\}\\
				& = 2^{k+1}j^{(k+1)}(2x)\left(1-j\left(-\frac{\log (c_4(\tilde h)x)}{\log c_5(\tilde h)} \right)\right)\\
				& +j(2x) \sum^{k+1}_{\a=1} C_{k+1, \a, \tilde h}x^{-k-1} j^{(\a)}\left(-\frac{\log (c_4(\tilde h)x)}{\log c_5(\tilde h)} \right)
			\end{align*}
			where
			\begin{align*}
				& C_{k+1, \a, \tilde h}= \left\{ \begin{array}{l}
				-k C_{k,1,\tilde h} \;\:\qquad \qquad \qquad \qquad \qquad \a=1\\
				-k C_{k,\a,\tilde h} -\frac{1}{\log c_5(\tilde h)}  C_{k,\a-1,\tilde h} \quad 1<\a<k+1\\
				-\frac{1}{\log c_5(\tilde h)}  C_{k,k,\tilde h} \qquad \qquad \qquad \qquad \a=k+1,
	 			\end{array}\right.
 			\end{align*}
			which concludes the proof of the claim.

			Recall $j^{(m)}\left(-\frac{\log (c_4(\tilde h)x)}{\log c_5(\tilde h)} \right)\neq 0$ implies $x\in\left(\frac{\tilde h}{c_1(h)},\frac{\tilde h}{c_3(\tilde h)}\right)$ for any $m>1$.
			If $x\in\left(\frac{\tilde h}{c_1(h)},\frac{\tilde h}{c_3(\tilde h)}\right)$, $x^{-1}<\frac{c_1(h)}{\tilde h}<1$.
			$\left(-\frac{1}{\log c_5(\tilde h)}\right)^{-m}<1$ for sufficiently small $\tilde h$.
			These imply $\frac{1}{\log c_5(\tilde h)}j(2x)x^{-1} j'\left(-\frac{\log (c_4(\tilde h)x)}{\log c_5(\tilde h)} \right)$ is uniformly bounded in $\tilde h$ and $x\in\re$ from Lemma 3.4. Thus $\chi_{\tilde h}$ is an admissible cut-off function.
	 \end{proof}

	  %Theorem 1.3 follows immediately from following Theorem.
%	\begin{thm}
%		Under the assumptions of Theorem 1.4, if $E\notin \mathrm{Cr}(V_{\infty})$ and $(1-j_{c_2(h)})u_h\to 0$ as $h \to 0$, then $(1-j_{c_1(h)})u_h\to 0$ as $h \to 0$.
%	\end{thm}

	%\begin{proof}[Proof of Theorem 1.3]

		%Let $c_0(h)=h^{-1}\norm{R_h}_{L^2(\re^n)}$.

		%We define $c(h)=\max_{0\leq\tilde h\leq h}\{\tilde h^{\frac12},c_0(\tilde h)^{\frac12}\}$. Then $c(h)$ is a monotone increasing function on $(0,1)$ and $c(h)$ satisfies $c(h)=o(1)$ and $c(h)^{-1}=o(h^{-1})$ as $h\to0$. It is also clear that $\norm{R_h}_{L^2(\re^n)}=o(hc(h))$ as $h\to0$.

		%Then, we can apply Theorem 3.3 to prove Theorem 1.3.
	%\end{proof}

	\begin{proof}[Proof of Theorem 1.4]

	 Since $1-j_{c_1(h)}(r) + j_{c_1(h)}(r)- j_{\tilde h}(r)=1 - j_{\tilde h}(r)$, it suffices to show $j_{c_1(h)}(r)- j_{\tilde h}(r)u_{c_2^{-1}(\tilde h)}\to0$ as $\tilde h\to 0$.

	Let $F_{\tilde h}(x)=\frac{c_3(\tilde h)}{\tilde h} x \chi_{\tilde h}(x)$. Since $\frac{c_3(\tilde h)}{\tilde h} x$ and all its derivatives are uniformly bounded on supp$\chi_{\tilde h}$ from Lemma 3.4, $F_{\tilde h}$ is an admissible cut-off function.

	We calculate the commutator of $\OP_{F_{\tilde h}}(a)$ and $P-E$ to obtain
	\begin{align*}
	& \jap{v_{\tilde h},[\mathrm{Op}_{F_{\tilde h}}(a),P-E]v_{\tilde h}}_{L^2(\re^n)}\\
	& = \frac{c_3(\tilde h) }{i}\jap{v_{\tilde h},\{r\chi_{\tilde h}(r)a(\rho,\th,\frac{\y}{r}),\rho^2+q( \th,\frac{\y}{r})+V_{\infty}(\th) \}^{\mathrm{w}}(\tilde hx,D_x)v_{\tilde h}}_{L^2(\re^n)}\\
	&  +\mathcal{O}(\tilde h^3)
	\end{align*}
	for any $a\in C^{\infty}_0(\re\times T^{*}S^{n-1})$.

	We also see
	\begin{align*}
	& \{r\chi_{\tilde h}(r)a(\rho,\th,\frac{\y}{r}),\rho^2+q( \th,\frac{\y}{r})+V_{\infty}(\th) \}\\
	& = 2\rho\chi_{\tilde h}(r)a(\rho,\th,\frac{\y}{r})\\
	& + \chi_{\tilde h}(r)\{(\pa_{\rho}a)(\rho,\th,\frac{\y}{r})q( \th,\frac{\y}{r})+(\pa_{\th}a)(\rho,\th,\frac{\y}{r})(\pa_{\y}q)( \th,\frac{\y}{r})\\
	& -(\pa_{\y}a)(\rho,\th,\frac{\y}{r})((\pa_{\th}q)( \th,\frac{\y}{r})+(\pa_{\th}V_{\infty})(\th)+2\rho\frac{\y}{r})\}\\
	& +\left\{4j'(2r)r\left(1-j\left(-\frac{\log (c_4(\tilde h)r)}{\log c_5(\tilde h)} \right)\right)   -\frac{2}{\log c_5(\tilde h)} j(2r) j'\left(-\frac{\log (c_4(\tilde h)r)}{\log c_5(\tilde h)} \right)\right\}\\
	& \times \rho a(\rho,\th,\frac{\y}{r}).
	\end{align*}

	Let $b_{\tilde h}(r,\rho,\th,\y)=4j'(2r)r\left(1-j\left(-\frac{\log (c_4(\tilde h)r)}{\log c_5(\tilde h)} \right)\right)\rho a(\rho,\th,\frac{\y}{r})$.
	For sufficiently small $r$, $b_{\tilde h}(r,\rho,\th,\y)=0$  which implies $b_{\tilde h}$ can be naturall regarded as a function on $T^{*}\re^n$. Also, one can show that $b_{\tilde h}$ is in the symbol class $S$ by direct calculation. Since $(1-j_{\tilde h})v_{\tilde h}\to0$ as $h\to0$, we obtain
	\begin{align*}
		& \lim_{\tilde h\to0}b_{\tilde h}^{\mathrm{w}}(hx,D_x)v_{\tilde h}=\lim_{\tilde h\to0}b_{\tilde h}^{\mathrm{w}}(hx,D_x)j_{\tilde h}v_{\tilde h}=0,
	\end{align*}
	where we have used the fact that $j'(2x)j(x)=0$.

	Let $c_{\tilde h}(r,\rho,\th,\y)= j(2r) j'\left(-\frac{\log (c_4(\tilde h)r)}{\log c_5(\tilde h)} \right)\rho a(\rho,\th,\frac{\y}{r})$.
	For sufficiently small $r$, $c_{\tilde h}(r,\rho,\th,\y)=0$  which implies $c_{\tilde h}$ can be naturall regarded as a function on $T^{*}\re^n$. Also, one can show that $c_{\tilde h}$ is in the symbol class $S$ by direct calculation. Thus we obtain, from Theorem 2.2,
	\begin{align*}
		& \norm{\frac{2}{\log c_5(\tilde h)} c_{\tilde h}^{\mathrm{w}}(\tilde h x, D_x)v_{\tilde h}}_{L^2(\re^n)} \leq -C\frac{2}{\log c_5(\tilde h)}
	\end{align*}
	Thus $\frac{2}{\log c_5(\tilde h)} c_{\tilde h}^{\mathrm{w}}(\tilde h x, D_x)v_{\tilde h}\to0$ as $\tilde h\to0$.

	From Theorem 2.2,
	\begin{align*}
		& \abs{ \jap{v_{\tilde h},[\mathrm{Op}_{F_{\tilde h}}(a),P-E]v_{\tilde h}}_{L^2(\re^n)} }\\
		& \leq \norm{\mathrm{Op}_{F_{\tilde h}}(a)v_{\tilde h}}_{L^2(\re^n)}\norm{(P-E)v_{\tilde h}}_{L^2(\re^n)}\leq C\norm{R_h}_{L^2(\re^n)}.
	\end{align*}
	From the definiton of $R_h$ and $c_3(\tilde h)$, $\norm{R_h}_{L^2(\re^n)}=o(c_3(h))$. Thus we obtains
	\begin{align*}
		& \jap{v_{\tilde h},[\mathrm{Op}_{F_{\tilde h}}(a),P-E]v_{\tilde h}}_{L^2(\re^n)} =o(c_3(\tilde h)).
	\end{align*}

	Let $H$ be a vector field on $T^*(\re\times T^*S^{n-1})$ defined by
	\begin{align*}
	&  H= q( \th,\y)\pa_{\rho}+(\pa_{\y}q)( \th,\y)\pa_{\th} -((\pa_{\th}q)( \th,\y)+(\pa_{\th}V_{\infty})(\th)+2\rho\y)\pa_{\y},
	\end{align*}
	Since $c_3(\tilde h)>c_1(h)^{\frac2{\d}}>\tilde h^2$,
	\begin{align*}
	& o(c_3(\tilde h)) = \frac{c_3(\tilde h)}{i}\jap{v_{\tilde h},\mathrm{Op}_{\chi_{\tilde h}}
	(2\rho a+ Ha)v_{\tilde h}}_{L^2(\re^n)}
	\end{align*}

	We fix a sequence $h_m>0$ such that $h_m\to0$ as $m\to\infty$. By taking subsequence if needed, we may assume $\{v_{h_m}\}$ has defect measure $\m_{\chi}$.
	Then we see
	\begin{align*}
	& \int_{\re\times T^*S^{n-1}} 2\rho a(\rho,\th,\y) + \{(\pa_{\rho}a)(\rho,\th,\y)q( \th,\y)+(\pa_{\th}a)(\rho,\th,\y)(\pa_{\y}q)( \th,\y)\\
	& -(\pa_{\y}a)(\rho,\th,\y)((\pa_{\th}q)( \th,\y)+(\pa_{\th}V_{\infty})(\th)+2\rho\y)\}\mathrm{d}\m_{\chi}=0.
	\end{align*}

	Let $\Phi_t$ be a flow generated by $H$.
	Then we obtain
	\begin{align*}
	&  \frac{\mathrm{d}}{\mathrm{d}t}\int_{\re\times T^*S^{n-1}} \Phi_{t}^{*}(a)e^{2\int^t_0\rho(s)\mathrm{d}s }\mathrm{d}\m_{\chi}=0,
	\end{align*}
	where $\rho(t)$ is defined by $\Phi_{t}(\rho,\th,\y)=(\rho(t),\th(t),\y(t))$.
	Thus
	\begin{align*}
	& \int_{\re\times T^*S^{n-1}} a\mathrm{d}\m_{\chi_c}=\int_{\re\times T^*S^{n-1}} \Phi_{t}^{*}(a)e^{2\int^t_0\rho(s)\mathrm{d}s }\mathrm{d}\m_{\chi}
	\end{align*}
	for any $t\in\re$.

	From Theorem 3.2 and Theorem 2.8, we obtain $\lim_{t\to\infty}\rho(t)\neq 0$ on $\mathrm{supp}\m_{\chi}$ if $E\notin \mathrm{Cv}(V)$.
	If $\rho_{\infty}=\lim_{t\to\infty}\rho(t) >0$, $\int^t_0\rho(s)\mathrm{d}s>\frac12 \rho_{\infty}$.
	 This implies $ \int_{\re\times T^*S^{n-1}} \Phi_{t}^{*}(a)\mathrm{d}e^{2\int^t_0\rho(s)\mathrm{d}s }\m_{\chi}$ diverges as $t\to\infty$ if $\m$ is non-zero.
	 If $\rho_{\infty}=\lim_{t\to\infty}\rho(t) <0$, one can prove $t\to-\infty$ $ \int_{\re\times T^*S^{n-1}} \Phi_{t}^{*}(a)\mathrm{d}e^{2\int^t_0\rho(s)\mathrm{d}s }\m_{\chi}$ diverges as $t\to-\infty$ if $\m$ is non-zero from Lemma 2.5 and Corollary 2.9.
	Thus $\m_{\chi}=0$ and the assertion follows.

	\end{proof}

	\begin{proof}[Proof of Theorem 1.2]
		We fix a sequence $h_m>0$ such that $h_m\to0$ as $m\to\infty$. By taking subsequence if needed, we may assume $\{v_{h_m}\}$ has defect measure $\m_{\chi}$.

		Let $H$ and $\Phi_{t}$ are the same with that in the proof of Theorem 1.4. In the same manner with the proof of the Theorem 1.4,
		\begin{align*}
			\int_{\re\times T^*S^{n-1}} a\mathrm{d}\m_{j}
			=\int_{\re\times T^*S^{n-1}} \Phi_{t}^{*}(a)e^{2\int^t_0\rho(s)\mathrm{d}s }\mathrm{d}\m_{j}
		\end{align*}
		for any $t\in\re$.

		Then we see $\m_j=0$ if $E\notin\mathrm{Cv}(V_{\infty})$, which is contradiction from Theorem 3.1 and the assumption on $u_h$. Thus $E\in\mathrm{Cv}(V_{\infty})$.

		If $E\in \mathrm{Cv}(V_{\infty})$ we see
		\begin{align*}
		\mathrm{supp}(\m_{j})\subset \{(\rho,\th,\y)\in\re\times T^{*}S^{n-1}\mid \lim_{t\to\infty}\rho(t)=0\}.
		\end{align*}
		Let $a\in C^{\infty}_0(\re\times T^{*}S^{n-1};[0,\infty))$ be such that
		\begin{align*}
		\mathrm{supp}(a)\cap \{q(\th,\pa_{\th}V_{\infty}(\th))+q(\th,\y)<\d\}=\phi.
		\end{align*}
		for some $\d>0$.

		Then we see
		\begin{align*}
		& \int_{\re\times T^*S^{n-1}} a\mathrm{d}\m_{j}	\\
		& =\lim_{t\to\infty}\int_{\re\times T^*S^{n-1}} \Phi_{t}^{*}(a) e^{2\int^t_0\rho(s)\mathrm{d}s } \mathrm{d}\m_{j} =\lim_{t\to\infty}\int_{\{\lim_{t\to\infty}\rho(t)=0\}} \Phi_{t}^{*}(a) e^{2\int^t_0\rho(s)\mathrm{d}s } \mathrm{d}\m_{j}.
		\end{align*}
		Since $\rho(t)$ is monotone increasing and $\lim_{t\to\infty}\rho(t)=0$, we obtain $\rho(t)<0$. This implies
		\begin{align*}
		& \lim_{t\to\infty}\int_{\{\lim_{t\to\infty}\rho(t)=0\}} \Phi_{t}^{*}(a)e^{2\int^t_0\rho(s)\mathrm{d}s }\mathrm{d}\m_{j} \leq\lim_{t\to\infty}\int_{\{\lim_{t\to\infty}\rho(t)=0\}} \Phi_{t}^{*}(a)\mathrm{d}\m_{j}.
		\end{align*}
		From Theorem 2.8, the dominant convergence theorem and the fact that $\lim_{t\to\infty}\Phi_{t}^{*}(a)(\rho,\th,\y)=0$ pointwise, we obtain
		\begin{align*}
		& \int_{\re\times T^*S^{n-1}} a\mathrm{d}\m_{j} = \lim_{t\to\infty}\int_{\{\lim_{t\to\infty}\rho(t)=0\}} \Phi_{t}^{*}(a) e^{2\int^t_0\rho(s)\mathrm{d}s }\mathrm{d}\m_{j}\leq 0.
		\end{align*}
		Since $a\geq 0$, we obtain
		\begin{align*}
		& \int_{\re\times T^*S^{n-1}} a\mathrm{d}\m_{j} = 0.
		\end{align*}
		This implies
		\begin{align*}
		\mathrm{supp}(\m_{j})\subset \{(\rho,\th,0)\in\re\times T^{*}S^{n-1}\mid \th\in\mathrm{Cr}(V_{\infty}),\lim_{t\to\infty}\rho(t)=0\}.
		\end{align*}
		If $(\rho,\th,\y)$ is in the set of right-hand side, $\rho(t)=\rho$ for any $t$ since the set of right hand side is contained in the fixed set of $\Phi_t$. Thus we obtain $\rho=0$ and $\th\in V^{-1}_{\infty}(E)$, which concludes the proof.
	\end{proof}

	\section{Example of asymptotic eigenvectors whose defect measure does not vanish}

	In this section, we construct an example of $u_h$ such that the corresponding semiclassical measure is non-zero. We will show the existence of the quasimodes with the following support condition.

	\begin{thm}
		Let $j$ be the same with the function in Section 1 and $E\in\re$, $\th_0\in V_{\infty}^{-1}(E)$ be such that $\pa_{\th}^{k}V_{\infty}(\th_0)=0$ for any $k \leq 2$. Under Assumption A, there exists a solution $u_h$ to the (1.1) which satisfies the following conditions:
		\begin{enumerate}
		\item{$u_h\in\mathcal{D}(P)$ and satisfies
		\begin{eqnarray*}
			\left\{
			\begin{array}{l}
			(P-E)u_h=R_h\\
			\norm{u_h}_{L^2(\re^n)}=1,
			\end{array}
			\right.
		\end{eqnarray*}}
		\item{$\norm{R_h}_{L^2(\re^n)}=o(h)$,}
		\item{$u_h$ satisfies $j(hr)u_h(r,\th)=u_h(r,\th)$,}
		\end{enumerate}

		where $\mathrm{dist}(\cdot,\cdot)$ denotes the distance defined by the metric on $S^{n-1}$ induced by the Euclidean metric on $\re^n$.

	\end{thm}

	\begin{rem*}
		Theorem 4.1 implies semiclassical measure $\m_{j}$ of $u_h$ does not vanish.
	\end{rem*}

	\begin{cor}
		Assume . Let $j$ be the same with the function in Section 1, $R>0$ and $0<\ell<1$ be the same with that in Theorem 1.4, $E\in\re$, and $\th_0\in S^{n-1}$. Under Assumption A, for any $C>0$ there exists a solution $u_h$ to the (1.1) which satisfies the following conditions:
		\begin{enumerate}
			\item{$u_h\in\mathcal{D}(P)$ and satisfies
				\begin{eqnarray*}
				\left\{
				\begin{array}{l}
				(P-E)u_h=R_h\\
				\norm{u_h}_{L^2(\re^n)}=1,
				\end{array}
				\right.
				\end{eqnarray*}}
			\item{$\norm{R_h}_{L^2(\re^n)}=o(1)$ as $h\to0$,}
			\item{$u_h$ satisfies $j(hr)u_h(r,\th)=u_h(r,\th)$,}
			\item{$\mathrm{supp}(u_h)\subset\{(r,\th)\in\re^n\mid r>R,\mathrm{dist}(\th,\th_0)<Cr^{-\ell}\}$ for sufficiently small $h>0$,}
			%\item{$\norm{(\pa_r^2+\frac{n-1}r\pa_r)f_h}_{L^2(\re:r^{n-1}\mathrm{d}r)}=\mathcal{O}(h^k)$ as $h\to0$.}
		\end{enumerate}

		where $\mathrm{dist}(\cdot,\cdot)$ denotes the distance defined by the metric on $S^{n-1}$ induced by the Euclidean metric on $\re^n$.
	\end{cor}

	\begin{proof}[Proof of Theorem 4.1]
		(1) We will construct $u_h$ of form $u_h(x)=f_h(r)g_h(\th)$ by the polar coordinate which satisfies following conditions in addition to the conditions in Theorem 4.1:
		\begin{enumerate}
			\item{$\norm{(\pa_r^2+\frac{n-1}r\pa_r)f_h}_{L^2((0,\infty):r^{n-1}\mathrm{d}r)}=o(h)$ as $h\to0$.}
			\item{$\norm{r^{-2}\bigtriangleup_{S^n-1}u_h}_{L^2(\re^n)}=o(h)$, where $\bigtriangleup_{S^n-1}$ denotes Laplacian on $S^{n-1}$.}
			\item{$\abs{V_{\infty}(\th)-E}=o(h)$ on supp$(g_h)$ as $h\to0$.}
		\end{enumerate}

		We may assume that $E=0$ without losing generality since $(V_{\infty}-E)$ is still homogeneous of order zero.

		Let $f\in C^{\infty}_0(1,\infty)\setminus\{0\}$.  We define $f_{h}(r)=C_hh^{-\frac{n}2}f(hr)$, where $C_h>0$ is renormalizing constant. Then we see that $h \leq r^{-1}\leq Ch$ on supp$f_h$ for some $C>0$ and one can easily calculate that $f_h$ satisfies the condition 1. in the beginning of proof.

		Since $\pa_{\th}^{k}V_{\infty}(\th_0)=0$ for any $k \leq 2$, from Taylor's theorem, there exists a small neighbor $U$ of $\th_0$ such that $V_{\infty}(\th)=\mathcal{O}(\mathrm{dist}(\th,\th_0)^{3})$ near $\th=\th_0$.

		Let $\phi \in C^{\infty}_0(\re)$ be such that supp$\phi \subset (-1,1)$ and $\phi(x)=1$ if $\abs{x}\leq \frac12$ and $0\leq\phi\leq1$. We define $\tilde g_h$ by
		\begin{align*}
		&\tilde g_h(\th)=\phi\left(\frac{\mathrm{dist}(\th,\th_0)}{h^{\frac38}}\right).
		\end{align*}
		Then we see that there exists $C>0$ such that $\abs{V_{\infty}}\leq Ch^{\frac{9}8}$ on supp$\tilde g_h$ for sufficiently small $h$. Thus $\abs{V_{\infty}}=o(h)$ as $h\to0$.
		Also, we obtain that $\norm{\bigtriangleup_{S^n-1}\tilde g_h(\th)}=o(h^{-\frac34})\norm{\tilde g_h}$. Let $C_h^{(2)}=\norm{\tilde g_h}_{L^2(S^{n-1})}^{-1}\neq0$ and $g_h=C_h^{(2)}\tilde g_h$.

		Since $\bigtriangleup_{S^n-1}g_h(\th)=o_{L^2(S^{n-1})}(h^{-\frac34})$ and $r^{-2}f_h(r)=\mathcal{O}_{L^2((0,\infty):r^{n-1}\mathrm{d}r)}(h^2)$, we see $r^{-2}\bigtriangleup_{S^n-1}u_h=o_{L^2(\re^n)}(h)$. Combining with the conditions of $f_h$ and $g_h$, we see $(P-E)u_h=o_{L^2(\re^n)}(h)$.

		Actually, we can calculate $\norm{u_h}_{L^2(\re^n)}=1$ from the definition of $f_h$ and $g_h$.
		From the definition of $f_h$, it is obvious that $j(h\abs{x})u_{h}(x)=u_{h}(x)$.
	\end{proof}

	\begin{proof}[Proof of Corollary 4.2]

		Let $E=E_1+E_2$ where $V(\th_0)=E_2$.
		Same as Theorem 4.1, we may assume that $E_2=0$ without losing generality.

		We will construct $u_h$ of form $u_h(x)=f_h(r)g_h(\th)$ which satisfies following conditions in addition to the conditions in Corollary 4.2:
		\begin{enumerate}
			\item{$\norm{(\pa_r^2+\frac{n-1}r\pa_r - E_1)f_h}_{L^2((0,\infty):r^{n-1}\mathrm{d}r)}=o(1)$ as $h\to0$.}
			\item{$\norm{r^{-2}\bigtriangleup_{S^{n-1}}u_h}_{L^2(\re^n)}=o(1)$, where $\bigtriangleup_{S^{n-1}}$ denotes Laplacian on $S^{n-1}$.}
			\item{$V_{\infty}(\th)=o(1)$ on supp$(g_h)$ as $h\to0$.}
		\end{enumerate}

		Let $\phi \in C^{\infty}_0(\re)$ be such that supp$\phi \subset (-1,1)$ and $\phi(x)=1$ if $\abs{x}\leq \frac12$ and $0\leq\phi\leq1$. We define $\tilde g_h$ by
		\begin{align*}
		&\tilde g_h(\th)=\phi\left(\frac{\mathrm{dist}(\th,\th_0)}{h^{\ell}}\right)
		\end{align*}
		$g_h=\tilde C_h\tilde g_h$ where $\tilde C_h$ is a renormalizing constant. Then $V_{\infty}(\th)=o(1)$ on supp$(g_h)$ as $h\to0$.

		Let $f\in C^{\infty}_0(1,\infty)\setminus\{0\}$.  We define $f_{h}(r)=C_hh^{-\frac{n}2}e^{-i\sqrt{E}r}f(hr)$, where $C_h>0$ is renormalizing constant.
		Then we see that
		\begin{align*}
			& \norm{(\pa_r^2+\frac{n-1}r\pa_r - E_1)f_h}_{L^2((0,\infty):r^{n-1}\mathrm{d}r)}=o(1) \text{ and}\\
			& \norm{r^{-2}\bigtriangleup_{S^{n-1}}u_h}_{L^2(\re^n)}=o(1).
		\end{align*}

		From the definition of $f_h$, there exists $C>0$ such that $h^{-1} < r < Ch^{-1}$ on supp$f_h$.
		On the other hand, $\mathrm{dist}(\th,\th_0)<Ch^{\ell}$ on supp$g_h$ for sufficiently small $h$. Thus we see $u_h(r,\th)=f_h(r)g_h(\th)$ satisfies
		\begin{align*}
			& \mathrm{supp}(u_h)\subset\{(r,\th)\in\re^n\mid r>R,\mathrm{dist}(\th,\th_0)<Cr^{-\ell}\}
		\end{align*}
		for sufficiently small $h>0$.

		Actually, we can calculate $\norm{u_h}_{L^2(\re^n)}=1$ from the definition of $f_h$ and $g_h$.

		From the definition of $f_h$, we obtain $j(h\abs{x})u_{h}(x)=u_{h}(x)$, which concludes the assertion.
	\end{proof}

	\section{Proof of Theorem 1.5}
		In this section, we prove observability results for Schr\"odinger operators with homogeneous potentials of order zero.

		\begin{proof}[Proof of Theorem 1.4]
			We prove theorem by constructing sequence of functions $u_m$ such that $ \int^T_0\int_{\O}\abs{e^{-itP}u_m(x)}^2\mathrm{d}x\mathrm{d}t\to0$ as $m\to\infty$.

			Let $X=\{(r,\th)\in\re^n\mid r>R,\mathrm{dist}(\th,\th_0)<Cr^{-\ell}\}$ and $u_h$ be solution of (1.1) which  constructed in Corollary 4.2. Then we can find $\tilde \chi \in C^{\infty}_0(0,\infty)$ such that $\tilde\chi(hr)f_h(r)=f_h(r)$.

			From the assumption of $k$ and $R$, we can take $\f_h \in C^{\infty}(S^{n-1};[0,1])$ so that supp$[\f]\cap \{\th\in S^{n-1}\mid \mathrm{dist}(\th,\th_0)<r^{-\ell}\}=\phi$ and $\tilde\chi(hr)\f_h(\th)=1$ on $\O$ for sufficiently small $h>0$. Then we see that supp$\left[\tilde\chi(hr)\f(\th)\right] \cap X=\phi$ for sufficiently small $h>0$.

			By the assumption on $u_{h_m}$ and $\f$, we see that
			\begin{align*}
			&0\leq\norm{u_{h_m}}_{L^2(\O)}\leq\jap{u_{h_m},\chi_{\O}u_{h_m}}\leq\jap{u_{h_m},\tilde\chi(h_mr)\f_h(\th)u_{h_m}},
			\end{align*}
			where $\chi_{\O}(x)$ denotes characteristic function of $\O$.
			Then from Theorem 4.1 (2) and (3), $j(2h_mr)\f_h(\th)u_{h_m}=0$ for sufficiently large $m$, which means $\norm{u_{h_m}}_{L^2(\O)}=0$ for sufficiently large $m$.

			Next we claim $F_m(t)=\jap{e^{-itP}u_{h_m},\chi_{\O}e^{-itP}u_{h_m}}_{L^2(\re^n)}\to0$ as $m\to\infty$.

			One can calculate as follows:
			\begin{align*}
			& \frac{\mathrm{d}F_m}{\mathrm{d}t}(t)=-i\jap{e^{-itP}Pu_{h_m},\chi_{\O}e^{-itP}u_{h_m}}_{L^2(\re^n)}\\
			& \quad \quad \quad \quad +i\jap{e^{-itP}u_{h_m},\chi_{\O}e^{-itP}Pu_{h_m}}_{L^2(\re^n)}.
			\end{align*}
			Thus we see
			\begin{align*}
			& \Bigabs{\frac{\mathrm{d}F_m}{\mathrm{d}t}(t)}\leq C\norm{u_{h_m}}_{L^2(\re^n)}\norm{(P-E)u_{h_m}}_{L^2(\re^n)}=C\norm{(P-E)u_{h_m}}_{L^2(\re^n)},
			\end{align*}
			where $C>0$ is a constant independent of $t$ and we have used boundedness of $\chi_{\O}$ in the first inequality and uniform boundedness of $u_m$ in the second inequality.

			Since $F_m(t)=F_m(0)+\int^t_0\frac{\mathrm{d}F_m}{\mathrm{d}t}(s)\mathrm{d}s$, we see for $t\in[0,T]$,
			\begin{align*}
			& \abs{F_m(t)}\leq \abs{F_m(0)} +\int^t_0\abs{\frac{\mathrm{d}F_m}{\mathrm{d}t}(s)}\mathrm{d}s \leq \abs{F_m(0)}+ \tilde C\norm{Pu_{h_m}}_{\mathcal{H}}T.
			\end{align*}
			Letting $m\to\infty$, we obtain the claim.

			%Thus we apply Lemma 8 with $\mathcal{H}=L^2(\re^n)$, $v_m=u_{h_m}$, $A=P-E$ and $B=\chi_{\O}(x)$ to see that$\jap{e^{-itP}u_{h_m},\chi_{\O}e^{-itP}u_{h_m}}\to0$ as $m\to\infty$ uniformly in $t\in[0,T]$.

			For any $\e'>0$, there exists sufficiently large $M>0$ so that $m>M$ implies $\abs{\jap{e^{-itP}u_{h_m},\chi_{\O}e^{-itP}u_{h_m}}}_{L^2(\re^n)} \leq \frac{\e'}{T}$. Then $\int^T_0\int_{\O}\abs{e^{-itP}u_m(x)}^2\mathrm{d}x\mathrm{d}t\leq\e'$ for $m>M$, which concludes the proof.
		\end{proof}
	\appendix

\end{document}